\definecolor{string}{rgb}{0.7,0.0,0.0}
\definecolor{comment}{rgb}{0.13,0.54,0.13}
\definecolor{keyword}{rgb}{0.0,0.0,1.0}
\theoremstyle{plain}
\newtheorem{theorem}{Theorem}[section]
\newtheorem{lemma}[theorem]{Lemma}
\newtheorem{remark}[theorem]{Remark}
\title{An interface formulation for the Poisson equation\\ in the presence of a semiconducting single-layer material}
\author{C. Jourdana$^{1}$ and P. Pietra$^2$}
\date{}
\begin{document}

\maketitle

\vspace{-0.8cm}\begin{small}
    \begin{center}
$^1$ Univ. Grenoble Alpes, CNRS, Grenoble INP$^{\dag}$, LJK, 38000 Grenoble, France \\
$^{\dag}$ Institute of Engineering Univ. Grenoble Alpes\\
$^2$ Istituto di Matematica Applicata e Tecnologie Informatiche ``E. Magenes'' - CNR\\
Via Ferrata 1, 27100 Pavia, Italy\\
\it{clement.jourdana@univ-grenoble-alpes.fr ; pietra@imati.cnr.it}
    \end{center}
\end{small}

\begin{abstract}
In this paper, we consider a semiconducting device with an active zone made of a single-layer material. The associated Poisson equation for the electrostatic potential (to be solved in order to perform self-consistent computations) is characterized by a surface particle density and an out-of-plane dielectric permittivity in the region surrounding the single-layer. To avoid mesh refinements in such a region, we propose an interface problem based on the natural domain decomposition suggested by the physical device. Two different interface continuity conditions are discussed. Then, we write the corresponding variational formulations adapting the so called three-fields formulation for domain decomposition and we approximate them using a proper finite element method. Finally, numerical experiments are performed to illustrate some specific features of this interface approach.
\end{abstract}

\noindent {\bf Keywords:} Poisson equation, interface model,  domain decomposition, saddle-point problem, finite element method, single-layer material, graphene Field-Effect Transistor.

\noindent {\bf AMS Subject Classification:} 35J20, 65N30, 65N55, 65Z05.



\section{Introduction}

Two-dimensional (2D) materials such as the most well-known graphene are crystal structures made of a single layer of atoms. With the recent progress to isolate, stack and characterize them, they are promising for a wide range of applications (see e.g. the reviews \cite{XLSC13,B_al13}). In particular they become an option to design post-silicon nanoelectronic devices. Field-Effect Transistors (FETs) based on graphene (GFETs) or, more generally, on semiconducting 2D materials (2D-FETs) give the possibility to have a channel thickness on the atomic scale which ideally should reduced short-channel effects while maintaining high carrier mobility. However, the performance of various 2D-FETs is still difficult to predict and accurate numerical simulations can take part in a better understanding.

A first focus is on transport properties in such a device. For instance, graphene is characterized by a zero bandgap and chiral massless carriers. It leads to unusual transport properties such as integer quantum Hall effect or Klein tunneling \cite{CGPNG}. Different transport models have been recently derived or investigated ranging from the two dimensional  Dirac equation \cite{CGPNG,FW14} to sophisticated drift-diffusion and hydrodynamical systems such that e.g. \cite{ZB11,ZJ13,LR19,NR21}. Another focus is on the Poisson equation for the electrostatic potential that has to be solved to perform self-consistent computations. In particular, the dielectric response of 2D layered structures has to be properly taken into account. It is this aspect that we tackle in this work proposing to model the single-layer as an interface and leading to a Poisson problem that can be solved numerically in an efficient way.

More precisely, we consider a device with an active zone made of a single-layer material sandwiched between two thick insulator regions (oxide). The associated Poisson equation is characterized by a surface particle density and an out-of-plane dielectric permittivity exhibited in a region of effective dielectric thickness surrounding the single-layer material, as discussed in \cite{FVF16}. Both these characteristics require an extremely fine mesh around the 2D material in order to provide an accurate approximate solution of this equation. To avoid it, we propose, averaging the potential across the dielectric effective region, an interface problem based on the natural domain decomposition suggested by the physical device. It is made of two Laplace equations in the oxide subdomains coupled with an effective Poisson equation on the interface with an extra source term that represents the contribution of the surrounding environment to the channel material. This approach is inspired by \cite{AJRS02} where it is used to model fractures in porous media. It is worth mentioning that, contrary to compact models of GFETs (e.g. \cite{JM11,UKV18}), it leads to a full multidimensional Poisson problem.

For the treatment of the Poisson equation in self-consistent models for graphene based devices, we recall also \cite{NR21}, where authors assume that the carrier charge is uniformly distributed in the volume between the two oxide regions and \cite{EHM14}, where authors prove existence and uniqueness results for a Dirac-Poisson problem and consider the self-consistent potential as the trace in the plane of the graphene of the 3D Poisson potential and thus as the solution of a fractional Laplacian equation. Finally, we mention \cite{NBCD10} where the Poisson equation is written in an integral form and the method of moments is used.
 
In order to match the interface potential to the oxide potentials, we first consider a simple continuity condition, obtaining an interface model that indeed takes into account the effective dielectric thickness, but it does  not retain the information of the out-of-plane permittivity when a channel dielectric diagonal tensor is considered. That is why we also introduce a Robin type continuity condition, following a work on fractured porous media \cite{MJR05} again.

A discrete fracture-matrix model for flow in porous media is considered in \cite{KMJR19}, where the exchange between the fracture and the matrix is imposed using a Lagrange multiplier, in the spirit of a fictitious domain approach. Here, to analyze and discretize our interface model, we write the corresponding variational formulations adapting the so called three-fields formulation for domain decomposition in the form introduced and analyzed in \cite{B03} (see also \cite{BM01,BK00}). It is a non conforming formulation of
non-overlapping domain decomposition that introduces the space of traces of functions in $H^1(\Omega)$ on the interface. The weak continuity between the 
2D subdomains and the interface is then imposed by means of Lagrange multipliers. This variational formulation enters in the framework of saddle point problems \cite{BF} which gives existence and uniqueness results as well as error estimates when the problem is approximated using a proper finite element method. Interestingly, the interface discretization does not need to match with the one of the subdomains and we take advantage of this flexibility in the numerical experiments we are performing to illustrate the approach.

The paper is organized as follows. The interface model with the two continuity conditions are introduced in Section 2. The variational formulation of the problem with the simple continuity condition adapted from the so called three-fields formulation is presented and analyzed in Section 3 and then discretized in Section 4. Section 5 is dedicated of the Robin type continuity condition that can be used to tackle an anisotropic permittivity. Finally, some numerical experiments are performed in Section 6.


\section{Interface model presentation}

As we said, we consider a device with an active zone made of a single-layer material sandwiched between two oxide regions. We assume the single-layer is large enough to be just considered as a one dimensional (1D) line along the direction $x$, the transport along the other direction being free and boundary effects being neglected. We denote by $y$ the direction perpendicular to the single-layer plane made of oxide/single layer/oxide slices. It gives a 2D domain $\Omega=]0,L[\times ]-\frac{l}{2},\frac{l}{2}[$ where $L$ is the longitudinal device length and $l$ the transversal one.

The electrostatic potential $u$ created by such a device is solution to the 2D Poisson equation
\begin{equation}\label{eq_poisson}
- \nabla \cdot \Big(\epsilon(x,y) \nabla u(x,y) \Big)= \rho(x) \delta(y), \hspace{0.5cm} \hbox{ in } \Omega,
\end{equation}
where $\rho$ is the surface particle density, $\delta$ the Dirac distribution imposing that the particles are confined to the single-layer plane and $\epsilon$ the dielectric permittivity. This equation is completed by boundary conditions. We assume that the boundary $\partial \Omega$ splits into two parts: the Ohmic contacts $\Gamma_D$ and the insulating parts $\Gamma_N$, with $\partial \Omega=\Gamma_D \cup \Gamma_N$ and $\Gamma_D \cap \Gamma_N=\emptyset$. The potential is prescribed on $\Gamma_D$ while there are no-flux boundary conditions on $\Gamma_N$:
\begin{equation}\label{cond_dirichlet}
u=u_D, \hspace{0.5cm} \hbox{ on } \Gamma_D,
\end{equation}
\begin{equation}\label{cond_neumann}
\nabla u \cdot \nu=0, \hspace{0.5cm} \hbox{ on } \Gamma_N,
\end{equation}
where $\nu$ is the outward unit normal on $\Gamma_N$ and $u_D$ represents Source, Drain and Gate potentials. Since Source and Drain contacts touch the single-layer material, we assume that $\Gamma_D$ contains the single-layer boundary points.

Due to single-layer/oxide interactions, the permittivity in the oxide is affected in a region surrounding the single-layer material. The choice of the permittivity $\epsilon(x,y)$ is a delicate modeling issue. Here, we introduce an effective dielectric thickness $d$ and we  assume one dielectric constant for the channel and another one for the oxide:
\begin{equation*}
\epsilon(x,y)=\begin{cases} \epsilon_{ch} \hbox{ for } |y| < \frac{d}{2} \\ \epsilon_{ox} \hbox{ otherwise} \end{cases}.
\end{equation*}
Such an approach has been used in \cite{OYG07,FI07}, e.g, and it is often referred to as ``box assumption''. The somehow arbitrariness in the choice of the discontinuity lines in \cite{OYG07,FI07} is mitigated by using the results in \cite{FVF16}, where studies of the atomic-scale Poisson equation provide values for the dielectric thickness, validating somehow the ``box assumption''.

Our objective is not to deal directly with the computationally demanding transmission problem that consists in imposing, along $\gamma_{\pm}=\{ (x,\pm \frac{d}{2}), \ x \in ]0,L[\}$, continuity of the potential and of the transversal electric displacement, as summarized in the following equations:
\begin{equation}\label{eq1_strip}
- \nabla \cdot (\epsilon_{ox} \nabla u_{\pm} )=0, \hspace{0.5cm} \hbox{ in } ]0,L[ \times \big( \pm ]{\textstyle \frac{d}{2}, \frac{l}{2}}[ \big),
\end{equation}
\begin{equation}\label{eq2_strip}
- \nabla \cdot (\epsilon_{ch} \nabla u_{ch} )= \rho(x) \delta(y), \hspace{0.5cm} \hbox{ in } ]0,L[\times ]{\textstyle-\frac{d}{2},\frac{d}{2}}[,
\end{equation}
\begin{equation}\label{cond_transm_strip}
u_{\pm} =u_{ch}, \quad \epsilon_{ox}  \partial_y u_{\pm} = \epsilon_{ch} \partial_y u_{ch} \quad \hbox{on } \gamma^{\pm}
\end{equation}
where $\delta$ is the Dirac delta function.

Instead, inspired by \cite{AJRS02} to model fractures in porous media, we propose to consider an interface problem obtained by averaging the potential across the dielectric effective region and considering $d$ small enough  to assume a matching between $\gamma_{+}$ and $\gamma_{-}$. Introducing
$$u_{\gamma}(x) =\frac{1}{d} \int_{-\frac{d}{2}}^{\frac{d}{2}} u_{ch}(x,y) \ dy,$$   
performing integration  in the transversal direction of equation  \eqref{eq2_strip} and using the flux continuity in \eqref{cond_transm_strip}, we obtain the 1D effective equation
$$-d (\epsilon_{ch} u_{\gamma}')' = \rho - \epsilon_{ox} \Big(\nabla u_1(x,0) \cdot n_1 + \nabla u_2(x,0) \cdot n_2\Big), \hspace{0.5cm} \hbox{ in } \gamma, $$
where $\gamma=]0,L[$ represents the single-layer line, $u_i$, $i=1,2$, are the  potentials associated to each oxide subdomain $\Omega_i$ ($\Omega_1=]0,L[ \times]0,\frac{l}{2}[$ and $\Omega_2=]0,L[\times]-\frac{l}{2},0[$) and $n_i$ are the two outward unit normals on $\partial \Omega_i \cap \gamma$. One should notice in this 1D equation the presence of the dielectric thickness $d$. Indeed, $-d \epsilon_{ch} u_{\gamma}'$ represents the electric displacement through the cross section of the dielectric effective region. Also, we emphasize that the extra source term appearing in the right-hand side (in addition to the 1D charge density $\rho$) represents the contribution to the interface of the transversal electric displacement from the surrounding environment. 

Consequently, the interface model that we analyze and discretize in the next sections consists in two Laplace equations in the oxide subdomains
\begin{equation}\label{eq_oxide}
- \nabla \cdot  (\epsilon_{ox}  \nabla u_i) = 0, \quad \hbox{in}~ \Omega_i, ~~i=1,2,
\end{equation} 
and the effective Poisson equation in the single-layer line
\begin{equation}\label{eq_interf}
- d (\epsilon_{ch} u_{\gamma}' )'  =  \rho -  \epsilon_{ox}(\nabla u_1\cdot n_1 +  \nabla  u_2 \cdot n_2), \quad \hbox{in}~ \gamma,
\end{equation}
the potentials associated to the three domains being connected by the continuity conditions
\begin{equation}\label{eq_continuity}
u_i= u_{\gamma} , \quad \hbox{on} ~\gamma, ~~i=1,2.
\end{equation}
This system is completed by the following mixed boundary conditions for the oxide potentials
\begin{equation}\label{cond_dirichlet2}
u_{i}= u_D, \quad \hbox{ on } \Gamma^i_D=\partial \Omega_i \cap \Gamma_D, ~~i=1,2,
\end{equation}
\begin{equation}\label{cond_neumann2}
\nabla u_{i} \cdot \nu_i =0, \quad \hbox{ on } \Gamma^i_N=\partial \Omega_i \cap \Gamma_N, ~~i=1,2,
\end{equation}
$\nu_i$ being the outward unit normal on $\partial \Omega \cap \partial \Omega_i$,
and by the following Dirichlet boundary condition for the interface potential
\begin{equation}\label{cond_dirichlet_interf}
u_{\gamma}(0)=u_D(0,0), \quad   u_{\gamma}(L)=u_D(L,0).
\end{equation}

In a more physically relevant setting the channel dielectric permittivity is given by a diagonal tensor 
\begin{equation}\label{perm_tensor}
\epsilon_{ch}= \begin{pmatrix} \epsilon_{//} & 0 \\ 0 & \epsilon_{\perp} \end{pmatrix} 
\end{equation}
rather than by a dielectric constant, introducing an in-plane permittivity  $\epsilon_{//}$ and an out-of-plane permittivity $\epsilon_{\perp}$. In that case, in the effective equation \eqref{eq_interf} only $\epsilon_{//}$ appears. To retain the information about $\epsilon_{\perp}$, we replace the continuity conditions \eqref{eq_continuity} by a Robin type condition as done in \cite{MJR05} to model fractures in porous media. Formally, we  say that
$$u_{\gamma}(x) \approx u_{ch}(x,\pm \frac{d}{2}) \mp \frac{d}{2}\partial_y u_{ch}(x,\pm \frac{d}{2})$$
and we use this approximation into the continuity of the transversal electric displacement along $\gamma_{\pm}$ \eqref{cond_transm_strip}. It gives
$$\epsilon_{ox}  \partial_y u_{\pm} = \epsilon_{\perp} \partial_y u_{ch} \approx \pm \epsilon_{\perp}  \frac{u_{\pm} - u_{\gamma}}{d/2}.$$
Assuming a matching between $\gamma_{+}$ and $\gamma_{-}$, we obtain the Robin type condition
\begin{equation}\label{eq_continuity2}
(u_i-u_{\gamma})+\alpha \ \epsilon_{ox} \nabla u_i \cdot n_i = 0, \quad \hbox{on} ~\gamma, ~~i=1,2,
\end{equation}
with $\alpha= \frac{d}{2\epsilon_{\perp}}$.
As we will see in Section \ref{section_robin}, this Robin condition at interface changes only slightly the mathematical analysis. Moreover, a numerical comparison of the two continuity conditions \eqref{eq_continuity} and \eqref{eq_continuity2}  will be performed in Section \ref{section_num}.


\section{Variational formulation}\label{int_form}

Let us first introduce some notation needed in the rest of the paper. For any domain $\widehat{\Omega}$ and $m\geq 0$, we denote by $\| \cdot \|_{m,\widehat{\Omega}}$ the $H^m(\widehat{\Omega})$ norm. For a convex Lipschitz $\Omega  \subset \mathbb{R}^2$, we denote by $\Gamma_0$ and $\Gamma_1$ two subsets of the boundary, with $\partial \Omega = \Gamma_0 \cup \Gamma_1$ and $\Gamma_0 \cap \Gamma_1 = \emptyset$. We shall employ the notation $H^1_{0,\Gamma_1}(\Omega)=\{ v \in H^1(\Omega), v=0 \hbox{ on } \Gamma_1\}$ and define
$H^{1/2}_{00} (\Gamma_0)$  as the trace space of $H^1_{0,\Gamma_1}(\Omega)$ equipped with the norm
\begin{equation}\label{def_norm_1/2_Sigma}
\| \sigma \|_{1/2,\Gamma_0}= \inf_{v \in H^1_{0,\Gamma_1}, \ v|_{\Gamma_0}=\sigma } \| v \|_{1,\Omega},
\end{equation}
and we shall denote by $( . , .)_{1/2,\Gamma_0}$ the corresponding inner product. 

Finally, duality between $H^{1/2}_{00} (\Gamma_0)$ and its dual space $\Big( H^{1/2}_{00} (\Gamma_0) \Big)'$ is written $<.,.>_{\Gamma_0} $ and we shall use as norm in the dual space the equivalent norm  $\|.\|_{-1/2,\Gamma_0}$ defined as:
\begin{equation}\label{def_norm_m1/2_Sigma}
\| \mu \|_{-1/2, \Gamma_0}= \sup_{v \in H^{1}_{0,\Gamma_1} (\Omega) } \frac{<\mu,v>_{\Gamma_0}}{\| v \|_{1,\Omega}}.
\end{equation}
Also, we denote by $C>0$ a generic constant with values that may change from line to line.

\begin{remark} \label{remark_equivalency}
Notice that the norm \eqref{def_norm_m1/2_Sigma} is equivalent to the dual norm defined by
$$\sup_{\sigma \in H^{1/2}_{00} (\Gamma_0) } \frac{<\mu,\sigma>_{\Gamma_0}}{\| \sigma \|_{1/2,\Gamma_0}}.$$
Indeed, on one hand, given $v \in H^1_{0,\Gamma_1}$, its trace  on $\Gamma_0$ (still denoted $v$)
is in  $ H^{1/2}_{00} (\Gamma_0)$ and verifies
$$\| v \|_{1/2,\Gamma_0} \leq C \| v \|_{1,\Omega}.$$
Therefore, for all $v\in H^{1}_{0,\Gamma_1} (\Omega)$,
$$
 \frac{<\mu,v>_{\Gamma_0}}{\| v \|_{1,\Omega}} \le C \sup_{\sigma \in H^{1/2}_{00} (\Gamma_0) } \frac{<\mu,\sigma>_{\Gamma_0}}{\| \sigma \|_{1/2,\Gamma_0}}.
$$
On the other hand, given $\sigma \in H^{1/2}_{00} (\Gamma_0)$, we can construct a lifting function in $H^1_{0,\Gamma_1}$, denoted $v_{\sigma}$, such that $v_{\sigma}|_{\Gamma_0} = \sigma$ and $-\Delta v_{\sigma} + v_{\sigma} =0$ in $\Omega$. Then, we have
$$<\mu,\sigma>_{\Gamma_0}= <\mu,v_{\sigma}|_{\Gamma_0} >_{\Gamma_0}$$ 
and
$$ \| \sigma \|_{1/2,\Gamma_0} = \| v_{\sigma} \|_{1,\Omega}.$$
Therefore, for all $\sigma \in H^{1/2}_{00} (\Gamma_0)$,
$$
\frac{<\mu,\sigma>_{\Gamma_0}}{\| \sigma \|_{1/2,\Gamma_0}} \le \sup_{v \in H^{1}_{0,\Gamma_1} (\Omega) } \frac{<\mu,v>_{\Gamma_0}}{\| v \|_{1,\Omega}}.
$$
\end{remark}

\noindent For simplicity of the presentation, we consider the problem \eqref{eq_oxide}-\eqref{cond_dirichlet_interf} with homogeneous Dirichlet conditions on $\Gamma_D$. It writes \\
\noindent {\it Find $(u_1,u_2,u_{\gamma})$ s.t.}
\begin{equation}\label{interf_pb_value_pb}
\left \{
\begin{array}{r c l @{\hspace{1cm}} l}
- \nabla \cdot  (\epsilon_{ox}  \nabla u_i) &=& 0  &\hbox{in}~ \Omega_i, ~~i=1,2,\\
- d (\epsilon_{ch} u_{\gamma}' )'  &=&  \rho -  \epsilon_{ox}(\nabla u_1\cdot n_1 +  \nabla  u_2 \cdot n_2)  & \hbox{on}~ \gamma,\\
u_i&=& u_{\gamma} & \hbox{on} ~\gamma,  \\
u_{i}&=&0 &\hbox{on } \Gamma^i_D, \\
\epsilon_{ox}\nabla u_{i} \cdot n_i &=&0 &\hbox{on } \Gamma^i_N,\\
u_{\gamma}(0)=u_{\gamma}(L)&=&0.
\end{array}
\right. 
\end{equation}
The functional setting we choose in order to write a variational formulation of the interface problem \eqref{interf_pb_value_pb} is the following.  We define the spaces:\\
- ${\bf V}=V^1 \times V^2 \times V^{\gamma}$, with 
$V^i=H^1_{0,\Gamma_D^i} (\Omega_i)$, $i=1,2$  and $ V^{\gamma}=H^1_0(\gamma),$
equipped with the norm
$$
||{\bf u}||_{{\bf V}} = \Big( \sum_{i=1}^2 ||u_i||^2_{1,\Omega_i}+||u_{\gamma}||^2_{1,\gamma}\Big)^{1/2},
$$
- $\boldsymbol \Lambda=\Lambda^1 \times \Lambda^2$, with $\Lambda^i =\big(H^{1/2}_{00}(\gamma)\big)'$ equipped with the norm
$$
||\boldsymbol \lambda ||_{\boldsymbol\Lambda} = \Big( \sum_{i=1}^2 ||\lambda_i ||^2_{-1/2, \gamma} \Big)^{1/2},
$$
where
$ \displaystyle \| \lambda_i \|_{-1/2, \gamma}= \sup_{v \in H^1_{0,\partial \Omega_i \backslash \gamma} (\Omega_i)} \frac{<\lambda_i,v>_{\gamma}}{\| v \|_{1,\Omega_i}}$.

\bigskip\noindent For ${\bf u}=(u_1,u_2,u_{\gamma}) \in {\bf V}$, ${\bf v}=(v_1,v_2,v_{\gamma}) \in {\bf V}$ and $\boldsymbol \mu=(\mu_1,\mu_2) \in \boldsymbol \Lambda$, we define the bilinear forms
$$
{\bf a}({\bf u},{\bf v})=\sum_{i=1}^2 \int_{\Omega_i} \epsilon_{ox}  \nabla u_i \cdot \nabla v_i  \, dx dy+d \int_{\gamma} \epsilon_{ch}  u'_{\gamma} v'_{\gamma}\, dx, $$
$$
{\bf b}(\boldsymbol \mu,{\bf u})=\sum_{i=1}^2 < \mu_i , u_i|_{\gamma} - u_{\gamma} >_{\gamma} .\qquad 
$$
Notice that $u_i \in V^{i}$ implies $u_i|_{\gamma} \in H^{1/2}_{00} (\gamma)$ (see \cite{LM,F00}). Therefore, with $u_{\gamma} \in V^{\gamma}$, the duality pairing is meaningful. In the following, we will use $u_i$ instead of  $u_i|_{\gamma} $ in the duality pairing unless it might create some confusion.

\medskip\noindent
We consider the following variational problem:\\
{\bf Variational formulation:}\\
\noindent Find $({\bf u},\boldsymbol \lambda) \in {\bf V} \times \boldsymbol \Lambda$ s.t.
\begin{equation} \label{interf_VF}
\left \{
\begin{array}{r c l @{\hspace{1cm}} l}
{\bf a}({\bf u},{\bf v}) - {\bf b}(\boldsymbol \lambda,{\bf v}) &=& \int_{\gamma} \rho \, v_{\gamma}\, dx,   & \forall {\bf v} \in {\bf V},\\
{\bf b}(\boldsymbol \mu,{\bf u}) &=& 0, & \forall \boldsymbol \mu \in \boldsymbol\Lambda. 
\end{array}
\right. 
\end{equation}
In this formulation, the continuity $u_i= u_{\gamma}$ on $\gamma$ is imposed as a constraint through the Lagrange multipliers $\boldsymbol \lambda$. The first equation is associated to the two Laplace equations in the oxide subdomains as well as the effective Poisson equation on the interface. Indeed, a regular solution  (${\bf u}$, $\boldsymbol \lambda$) to \eqref{interf_VF} is linked to a solution to \eqref{interf_pb_value_pb} in the following sense. Taking $v_{\gamma}=0$ in the first equation of  \eqref{interf_VF} gives
\begin{equation*}
\int_{\Omega_i} \epsilon_{ox}  \nabla u_i \cdot \nabla v_i  \, dx dy - < \lambda_i , v_i >_{\gamma} = 0 \qquad \forall v_i \in V^i, \quad i=1,2.
\end{equation*}
Since $\overline{\Gamma}_N^i \cap \overline{\gamma}$ is empty, a Green formula gives for $v_i \in V^i$
$$\int_{\Omega_i} \epsilon_{ox}  \nabla u_i \cdot \nabla v_i  \, dx dy = -\int_{\Omega_i} \nabla \cdot (\epsilon_{ox}  \nabla u_i ) v_i  \, dx dy +  < \epsilon_{ox}  \nabla u_i \cdot n_i , v_i >_{\gamma} +  < \epsilon_{ox}  \nabla u_i \cdot n_i , v_i >_{\Gamma_N^i}.$$
Choosing first $v_i \in H^1_0(\Omega_i) \subset V^i$, we obtain $-\nabla \cdot  (\epsilon_{ox}  \nabla u_i) = 0,$ $a.e.$ in $\Omega_i$. Then, for $v_i \in H^1_{0,\gamma \cup \Gamma_D^i}(\Omega_i) \subset V^i$, we have $v_i|_{\Gamma_N^i} \in H^{1/2}_{00}(\Gamma_N^i)$ and consequently
\begin{equation*}
< \epsilon_{ox}\nabla u_i \cdot n_i ,v_i>_{\Gamma_N^i}=0 , \hspace{0.5cm} \hbox{for all } v_i \in H^{1/2}_{00}(\Gamma_N^i).
\end{equation*}
Next, for $v_i \in V^i$, we obtain
 \begin{equation}\label{conormal}
<\lambda_i,v_i>_{\gamma} = < \epsilon_{ox}\nabla u_i \cdot n_i ,v_i>_{\gamma} , \hspace{0.5cm} \hbox{for all } v_i \in H^{1/2}_{00}(\gamma).
\end{equation}
It links $\lambda_i$ to $\epsilon_{ox}\nabla u_i \cdot n_i $. Finally, taking $v_i=0$ for $i=1,2$ in the first equation of  \eqref{interf_VF} and using \eqref{conormal}, we obtain
\begin{equation*}
d \int_{\gamma} \epsilon_{gr}^l  u'_{\gamma} v'_{\gamma}\, dx + \sum_{i=1}^2 <\epsilon_{ox}\nabla u_i \cdot n_i   ,v_{\gamma} >_{\gamma} = \int_{\gamma} \rho v_{\gamma}\, dx
\end{equation*}
which is a  weak form for the second equation of \eqref{interf_pb_value_pb}. The second equation of \eqref{interf_VF} imposes the continuity $u_i= u_{\gamma}$  on $\gamma$ in a weak form.

\begin{remark}
Formulation \eqref{interf_VF} is an adaptation to the interface problem of the so called three-fields-formulation in the form introduced and analyzed in \cite{B03} (see also \cite{BM01,BK00}). We notice however, that, for the peculiarity of our setting that provides directly coercivity of the bilinear form ${\bf a}({\bf u},{\bf v})$  on the whole space ${\bf V}$, we don't really introduce three fields, but rather work with two spaces only: ${\bf V}$ (space for the potentials on $\Omega_i$'s and on $\gamma$) and $\boldsymbol \Lambda$ (Lagrange multipliers for the Dirichlet BC's on $\gamma$, to be interpreted as conormal derivative of $u_i$ as seen in \eqref{conormal}). 
\end{remark}

\medskip\noindent Existence and uniqueness results follow from  the theory for saddle point problems \cite{BF} as stated by Theorem \ref{theorem1}, thanks to the properties of the bilinear forms ${\bf a}({\bf u},{\bf v})$ and ${\bf b}(\boldsymbol \mu,{\bf v})$ collected in the next Lemma. 

\begin{lemma} \label{lemma1}
The bilinear form ${\bf a}({\bf u},{\bf v})$ is continuous on ${\bf V} \times {\bf V}$ and coercive, that is
\begin{equation} \label{lemma1:one}
\exists \alpha_1 > 0 : {\bf a}({\bf u},{\bf v}) \le \alpha_1||{\bf u}||_{{\bf V}} ||{\bf v}||_{{\bf V}} ,~~ {\rm{for~all}} ~{\bf u} \in {\bf V},~~ {\rm{for~all}} ~{\bf v} \in {\bf V},
\end{equation}
\begin{equation} \label{lemma1:two}
\exists \alpha_2 > 0 : {\bf a}({\bf u},{\bf u}) \ge \alpha_2 ||{\bf u}||^2_{{\bf V}} ,~~ {\rm{for~all}} ~{\bf u} \in {\bf V}.
\end{equation}
The bilinear form ${\bf b}(\boldsymbol \mu,{\bf v})$ is continuous on $\boldsymbol \Lambda \times {\bf V}$, that is
\begin{equation} \label{lemma1:three}
\exists M > 0 : {\bf b}(\boldsymbol \mu,{\bf v}) \le M ||\boldsymbol \mu||_{\boldsymbol\Lambda} ||{\bf v}||_{{\bf V}} ,~~ {\rm{for~all}} ~\boldsymbol \mu \in \boldsymbol \Lambda,~~ {\rm{for~all}} ~{\bf v} \in {\bf V}
\end{equation}
and it satisfies the {\rm {inf-sup}} condition
\begin{equation} \label{lemma1:four}
\exists \beta>0: \inf_{\boldsymbol \lambda \in \boldsymbol\Lambda}  \sup_{{\bf v} \in {\bf V}} \frac {{\bf b}(\boldsymbol \lambda,{\bf v}) }
{||\boldsymbol \lambda ||_{\boldsymbol \Lambda} ||{\bf v}||_{{\bf V}}} \ge \beta.
\end{equation}
\end{lemma}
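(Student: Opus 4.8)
The plan is to verify the four claims separately, in increasing order of delicacy; three of them are essentially mechanical, and the structural content lives in the coercivity and in the way the multiplier norm is built. First I would dispose of the continuity \eqref{lemma1:one} of $\mathbf{a}$: writing it as the two bulk integrals over the $\Omega_i$ plus the interface integral over $\gamma$, a Cauchy--Schwarz estimate on each term, using that $\epsilon_{ox}$, $\epsilon_{ch}$ and $d$ are positive constants and majorizing the gradient $L^2$-norms by the full $H^1$-norms, gives the bound with $\alpha_1=\max(\epsilon_{ox},d\,\epsilon_{ch})$ after a discrete Cauchy--Schwarz on the three-term sum. For the coercivity \eqref{lemma1:two} the key observation is that $\mathbf{a}(\mathbf{u},\mathbf{u})=\epsilon_{ox}\sum_i\|\nabla u_i\|_{0,\Omega_i}^2+d\,\epsilon_{ch}\|u_\gamma'\|_{0,\gamma}^2$ already carries a full gradient term for \emph{every} component of $\mathbf{u}$, the interface field included; this is exactly the peculiarity noted in the preceding remark that yields coercivity on all of $\mathbf{V}$ and not merely on a kernel. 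I would then invoke the Poincaré inequality on each $\Omega_i$ (admissible since $u_i$ vanishes on $\Gamma^i_D$, of positive measure) and on $\gamma$ (admissible since $u_\gamma(0)=u_\gamma(L)=0$) to get $\|u_i\|_{1,\Omega_i}^2\le(1+C_i)\|\nabla u_i\|_{0,\Omega_i}^2$ and $\|u_\gamma\|_{1,\gamma}^2\le(1+C_\gamma)\|u_\gamma'\|_{0,\gamma}^2$, whence $\alpha_2=\min\!\big(\epsilon_{ox}/(1+\max_iC_i),\,d\,\epsilon_{ch}/(1+C_\gamma)\big)$.

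For the continuity \eqref{lemma1:three} of $\mathbf{b}$ I cannot estimate $\langle\lambda_i,u_i|_\gamma\rangle_\gamma$ directly from the defining supremum of $\|\lambda_i\|_{-1/2,\gamma}$, because a generic $u_i\in V^i$ need not vanish on $\Gamma^i_N$ and is therefore not an admissible competitor in $H^1_{0,\partial\Omega_i\setminus\gamma}(\Omega_i)$. Instead I would pass to the equivalent dual norm of Remark~\ref{remark_equivalency}, which furnishes $|\langle\lambda_i,\sigma\rangle_\gamma|\le C\,\|\lambda_i\|_{-1/2,\gamma}\,\|\sigma\|_{1/2,\gamma}$ for every $\sigma\in H^{1/2}_{00}(\gamma)$, and apply it to $\sigma=u_i|_\gamma-u_\gamma$. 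The first contribution is controlled by the continuity of the trace $V^i\to H^{1/2}_{00}(\gamma)$, which holds because $\overline{\Gamma}^i_N\cap\overline{\gamma}=\emptyset$ so that the trace on $\gamma$ does not feel the lack of vanishing on $\Gamma^i_N$; the second by the continuous embedding $H^1_0(\gamma)\hookrightarrow H^{1/2}_{00}(\gamma)$. Summing over $i$ and a last discrete Cauchy--Schwarz give $M$.

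The inf-sup condition \eqref{lemma1:four} is the one where the tailored choice of the norm on $\boldsymbol\Lambda$ pays off. For fixed $\boldsymbol\lambda$ I would take the interface component $v_\gamma=0$, which decouples the subdomains in $\mathbf{b}(\boldsymbol\lambda,\mathbf{v})=\sum_i\langle\lambda_i,v_i|_\gamma\rangle_\gamma$. For each $i$ and each $\eta>0$ the definition of $\|\lambda_i\|_{-1/2,\gamma}$ supplies a near-maximizer $w_i\in H^1_{0,\partial\Omega_i\setminus\gamma}(\Omega_i)\subset V^i$; after rescaling to $v_i$ with $\|v_i\|_{1,\Omega_i}=\|\lambda_i\|_{-1/2,\gamma}$ one has $\langle\lambda_i,v_i\rangle_\gamma\ge(1-\eta)\|\lambda_i\|_{-1/2,\gamma}^2$. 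Then $\mathbf{v}=(v_1,v_2,0)$ satisfies $\mathbf{b}(\boldsymbol\lambda,\mathbf{v})\ge(1-\eta)\|\boldsymbol\lambda\|_{\boldsymbol\Lambda}^2$ and $\|\mathbf{v}\|_{\mathbf{V}}=\|\boldsymbol\lambda\|_{\boldsymbol\Lambda}$, so the quotient is at least $1-\eta$; letting $\eta\to0$ gives \eqref{lemma1:four} with $\beta=1$.

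The genuinely delicate point is the trace estimate behind \eqref{lemma1:three}, namely controlling $\|u_i|_\gamma\|_{1/2,\gamma}$ by $\|u_i\|_{1,\Omega_i}$ for functions vanishing only on $\Gamma^i_D$: one has to lean on the disjointness $\overline{\Gamma}^i_N\cap\overline{\gamma}=\emptyset$ and on the norm equivalence of Remark~\ref{remark_equivalency}, since $u_i$ itself is not an admissible lifting in the infimum defining $\|\cdot\|_{1/2,\gamma}$. By contrast, the inf-sup condition --- usually the hard part of a three-fields formulation --- is here almost immediate, precisely because $\|\cdot\|_{-1/2,\gamma}$ was defined as the dual norm realized by the subdomain test functions, so the choice $v_\gamma=0$ reduces \eqref{lemma1:four} to that very definition.
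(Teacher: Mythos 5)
Your proof is correct on all four counts, and on the one point the paper actually proves in detail --- the inf-sup condition \eqref{lemma1:four} --- you take a slightly different and in fact sharper route. Both you and the paper reduce the inf-sup to the definition of the tailored norm $\|\cdot\|_{-1/2,\gamma}$ by taking $v_\gamma=0$ and testing with subdomain functions in $H^1_{0,\partial\Omega_i\setminus\gamma}(\Omega_i)\subset V^i$; but the paper tests one subdomain at a time, choosing successively ${\bf v}=(v_1,0,0)$ and ${\bf v}=(0,v_2,0)$ and using $\max(a,b)\ge\tfrac12(a+b)$, which yields $\beta=\tfrac12$, whereas you combine near-maximizers from the two subdomains simultaneously, rescaled so that $\|v_i\|_{1,\Omega_i}=\|\lambda_i\|_{-1/2,\gamma}$, which yields the optimal $\beta=1$. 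This is a genuine improvement in the constant: the paper's own remark after the lemma states that $\beta=1$ ``could be obtained following [BK00]'' via a lifting construction for functions in $H^{1/2}_{00}(\gamma)$, and your elementary scaling argument shows that, with two subdomains and this subdomain-realized dual norm, no lifting is needed. (Since the constant plays no role in the sequel, the paper's cruder bound costs nothing there.) For properties \eqref{lemma1:one}--\eqref{lemma1:three} the paper only says they ``follow easily''; your fleshed-out arguments are the intended ones, and your observation on \eqref{lemma1:three} is exactly the right subtlety: a generic $u_i\in V^i$ does not vanish on $\Gamma^i_N$ and so is not an admissible competitor in the supremum defining $\|\lambda_i\|_{-1/2,\gamma}$, which forces one to go through the norm equivalence of Remark \ref{remark_equivalency} together with the continuity of the trace $V^i\to H^{1/2}_{00}(\gamma)$ (valid precisely because $\overline{\Gamma}^i_N\cap\overline{\gamma}=\emptyset$) and the embedding $H^1_0(\gamma)\hookrightarrow H^{1/2}_{00}(\gamma)$.
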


\begin{proof}
Properties (\ref{lemma1:one})-(\ref{lemma1:three}) follow easily. It is also the case for the inf-sup condition \eqref{lemma1:four} since, choosing successively ${\bf v}=(v_1,0,0)$ and ${\bf v}=(0,v_2,0)$, we obtain
\begin{eqnarray*}
\sup_{{\bf v} \in {\bf V}} \frac {b(\boldsymbol \lambda,{\bf v}) }
{||{\bf v}||_{{\bf V}}} &=& \sup_{{\bf v} \in {{\bf V}} } \frac {\sum_i  < \lambda_i , v_i-v_{\gamma} >_{\gamma} } 
{ ||{\bf v}||_{{\bf V}}} \geq \frac{1}{2} \Big( \sup_{v_1\in V^1} \frac { < \lambda_1 , v_1 >_{\gamma}  }{||v_1||_{1,\Omega_1} } +  \sup_{v_2\in V^2} \frac { < \lambda_2 , v_2 >_{\gamma}  }{||v_2||_{1,\Omega_2} } \Big) \\
&\geq & \frac{1}{2} \Big( \sup_{v_1\in H^1_{0,\partial \Omega_1 \backslash \gamma} (\Omega_1)} \frac { < \lambda_1 , v_1 >_{\gamma}  }{||v_1||_{1,\Omega_1} } +  \sup_{v_2\in H^1_{0,\partial \Omega_2 \backslash \gamma} (\Omega_2)} \frac { < \lambda_2 , v_2 >_{\gamma}  }{||v_2||_{1,\Omega_2} } \Big) \\
&= &  \frac{1}{2} \Big(  \|\lambda_1 \|_{-1/2, \gamma} + \|\lambda_2 \|_{-1/2, \gamma} \Big) \geq \frac{1}{2}  \|\boldsymbol \lambda \|_{\boldsymbol \Lambda}.
\end{eqnarray*}
\end{proof}

\begin{remark}
A constant $\beta=1$ could be obtained following \cite{BK00} where a constant independent of the number of subdomains is needed. It relies on the definition of a lifting function for functions in $H^{1/2}_{00}(\gamma)$ as introduced in Remark \ref{remark_equivalency}. Since for our application this constant does not play any role, we do not present this computation.
\end{remark}

\begin{theorem}\label{theorem1}
 For $\rho \in L^2(\gamma)$, there exists a unique  solution $({\bf u},\boldsymbol \lambda) \in {\bf V} \times \boldsymbol \Lambda$ to problem \eqref{interf_VF}. Moreover, the following bounds hold
\begin{eqnarray} 
 ||{\bf u}||_{\bf V} &\le& \frac 1 {\alpha_2} ||\rho||_{0,\gamma} , \label{theorem1:one}\\
 \label{theorem1:two}
 ||\boldsymbol \lambda||_{\boldsymbol \Lambda} &\le& \frac{1}{\beta} (1+\frac{\alpha_1}{\alpha_2} ) ||\rho||_{0,\gamma} .
 \end{eqnarray}
\end{theorem}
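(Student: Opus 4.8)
The plan is to recognize \eqref{interf_VF} as a saddle point problem of the standard type covered by the abstract theory of \cite{BF} and to invoke that theory directly. Lemma \ref{lemma1} provides exactly the four hypotheses required: continuity \eqref{lemma1:one} and coercivity \eqref{lemma1:two} of ${\bf a}$, together with continuity \eqref{lemma1:three} and the inf-sup condition \eqref{lemma1:four} for ${\bf b}$. The only point deserving a comment is that the abstract theory requires coercivity of ${\bf a}$ merely on the kernel $\{ {\bf v} \in {\bf V} : {\bf b}(\boldsymbol \mu,{\bf v})=0 \ \forall \boldsymbol \mu \in \boldsymbol \Lambda\}$ of ${\bf b}$; since \eqref{lemma1:two} gives coercivity on the whole space ${\bf V}$, this is satisfied \emph{a fortiori}, and existence and uniqueness of $({\bf u},\boldsymbol \lambda)$ follow immediately.

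For the a priori estimate \eqref{theorem1:one}, the key observation is that the second equation of \eqref{interf_VF}, taken with $\boldsymbol \mu=\boldsymbol \lambda$, shows that ${\bf b}(\boldsymbol \lambda,{\bf u})=0$, i.e. the solution ${\bf u}$ lies in the kernel of ${\bf b}$. I would then test the first equation of \eqref{interf_VF} with ${\bf v}={\bf u}$, so that the term ${\bf b}(\boldsymbol \lambda,{\bf u})$ drops out and ${\bf a}({\bf u},{\bf u})=\int_{\gamma} \rho\, u_{\gamma}\, dx$. Applying coercivity \eqref{lemma1:two} on the left and Cauchy--Schwarz on the right, together with the obvious bound $\|u_{\gamma}\|_{0,\gamma}\le \|u_{\gamma}\|_{1,\gamma}\le \|{\bf u}\|_{\bf V}$, gives $\alpha_2\|{\bf u}\|_{\bf V}^2 \le \|\rho\|_{0,\gamma}\|{\bf u}\|_{\bf V}$, from which \eqref{theorem1:one} follows after dividing by $\|{\bf u}\|_{\bf V}$.

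To obtain \eqref{theorem1:two}, I would rewrite the first equation of \eqref{interf_VF} as ${\bf b}(\boldsymbol \lambda,{\bf v}) = {\bf a}({\bf u},{\bf v}) - \int_{\gamma} \rho\, v_{\gamma}\, dx$ for all ${\bf v}\in{\bf V}$, and feed it into the inf-sup condition \eqref{lemma1:four}, which for fixed $\boldsymbol \lambda$ reads $\beta\|\boldsymbol \lambda\|_{\boldsymbol \Lambda}\le \sup_{{\bf v}\in{\bf V}} {\bf b}(\boldsymbol \lambda,{\bf v})/\|{\bf v}\|_{\bf V}$. This yields
\[
\beta\,\|\boldsymbol \lambda\|_{\boldsymbol \Lambda}
\le \sup_{{\bf v}\in{\bf V}} \frac{{\bf a}({\bf u},{\bf v}) - \int_{\gamma} \rho\, v_{\gamma}\, dx}{\|{\bf v}\|_{\bf V}}.
\]
Bounding the numerator by the continuity \eqref{lemma1:one} of ${\bf a}$ and Cauchy--Schwarz, so that $|{\bf a}({\bf u},{\bf v}) - \int_{\gamma} \rho\, v_{\gamma}\, dx| \le (\alpha_1\|{\bf u}\|_{\bf V} + \|\rho\|_{0,\gamma})\|{\bf v}\|_{\bf V}$, and then inserting the already established bound \eqref{theorem1:one} for $\|{\bf u}\|_{\bf V}$, gives $\beta\|\boldsymbol \lambda\|_{\boldsymbol \Lambda}\le (1+\tfrac{\alpha_1}{\alpha_2})\|\rho\|_{0,\gamma}$, which is \eqref{theorem1:two}.

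Finally, a word on difficulty: there is essentially no substantial obstacle here, since the entire analytical content has been front-loaded into Lemma \ref{lemma1}. The main thing to get right is the bookkeeping order, namely deriving \eqref{theorem1:one} first (exploiting that ${\bf u}$ lies in the kernel of ${\bf b}$) and only then \eqref{theorem1:two} (which reuses the bound on $\|{\bf u}\|_{\bf V}$). The coercivity-on-the-whole-space feature, highlighted in the remark following the variational formulation, is precisely what makes the kernel argument trivial and removes any need to construct a special test function realizing coercivity only on the kernel of ${\bf b}$.
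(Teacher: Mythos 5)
Your proposal is correct and follows essentially the same route as the paper: the paper's proof consists precisely of invoking the abstract saddle point theory of \cite{BF}, with all hypotheses supplied by Lemma \ref{lemma1}. The only difference is that you unfold the standard derivation of the a priori bounds \eqref{theorem1:one}--\eqref{theorem1:two} (testing with ${\bf v}={\bf u}$ after noting ${\bf b}(\boldsymbol\lambda,{\bf u})=0$, then feeding the first equation into the inf-sup condition), which is exactly the argument the paper delegates to the citation, and your bookkeeping reproduces the stated constants correctly.
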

\begin{proof} Existence and bounds (\ref{theorem1:one})-(\ref{theorem1:two}) follow simply from the general theory \cite{BF} thanks to Lemma \ref{lemma1}. 
\end{proof}

\begin{remark}\label{Remark_inf_sup}     
Notice that Theorem \ref{theorem1} needs only an {\rm inf-sup} condition relating the Lagrange multipliers space $\Lambda^i$ to $V^i$, space of functions on $\Omega_i$. This fact, together with the coercivity of ${\bf a}({\bf u},{\bf v})$ on the whole ${\bf V}$, will allow us to choose the finite dimensional subspace of $V^{\gamma}$ independently of the other spaces.
\end{remark}


\section{Finite element approximation}\label{discrete_form}

We introduce here a Galerkin discretization of problem \eqref{interf_VF}, with care in the need of compatibility between the discrete approximations of $V^i$ and $\Lambda^i$. Let $\{ \mathcal{T}_{h_i}(\Omega_i)\}_{h_{i}}$ be a shape regular family of decompositions of $\Omega_i$ into triangles  and $\{ \mathcal{E}_{h_\gamma}(\gamma)\}_{h_{\gamma}}$ be a regular family of decompositions of $\gamma$ into intervals. Moreover, we denote by $\{ \mathcal{T}_{h_i}(\gamma)\}_{h_{i}}$ the family of decompositions of $\gamma$ induced by $\{ \mathcal{T}_{h_i}(\Omega_i)\}_{h_{i}}$.

\noindent The discrete spaces for the potential in the domains $\Omega_i$'s and on $\gamma$ are chosen as follows:

$V_{h_i}^i=\{ v \in \mathcal{C}^0(\overline{\Omega_i}), v|_{T} \in \mathbb{P}^1(T) \hbox{ for all } T \in \mathcal{T}_{h_i}(\Omega_i), v=0 \hbox{ on } \Gamma_D^i\}$,

$V^{\gamma}_{h_{\gamma}} =\{ v\in \mathcal{C}^0(\overline {\gamma} ), v|_{e} \in \mathbb{P}^1(e), \hbox{ for all } e \in  \mathcal{E}_{h_\gamma}(\gamma), v(0)=v(L)=0\}$,

 ${\bf V}_h=V_{h_1}^1 \times V_{h_2}^2 \times V^{\gamma}_{h_{\gamma}}$  $\subset$  ${\bf V}$.
 
\noindent On the interface $\gamma$ the discrete spaces for the Lagrange multipliers are made of linear functions on the intervals $e \in \mathcal{T}_{h_i}(\gamma)$, modified to be constant in the limit intervals.  Therefore, denoting by $e_{0}^i$ and $e_L^i$ the first and the last interval of $\mathcal{T}_{h_i}(\gamma)$, we introduce

$\Lambda_{h_i}^i =\{ \lambda\in \mathcal{C}^0(\overline{\gamma} ), \lambda|_e \in \mathbb{P}^0(e) \hbox{ for all } e \in \{e_0^i,e_L^i\}, \ \ \lambda|_e \in \mathbb{P}^1(e) \hbox{ for all } e \in \mathcal{T}_{h_i}(\gamma)\backslash \{e_0^i,e_L^i\} \} $,

$\boldsymbol \Lambda_h=\Lambda_{h_1}^1 \times \Lambda_{h_2}^2  \subset \boldsymbol \Lambda$.

\noindent Since the elements of $\Lambda_{h_i}^i$ are p.w. polynomials, the duality pairing is an integral and we can write, for $\lambda_{h_i}^i \in \Lambda_{h_i}^i$, 
\begin{equation}\label{duality_product_discr}
< \lambda^i_{h_i}, v_i >_{\gamma}=\int_{\gamma} \lambda_{h_i}^i v_i \, dx, \quad  \forall v_i \in V^i.
\end{equation}
In the following, to simplify the presentation, we use the notation $h$ instead of $h_i$, unless it might create some confusion.

\subsection{Discrete problems}

\noindent The discrete variational problem corresponding to  \eqref{interf_VF} is\\
{\bf Discrete variational formulation:}

\noindent  Find $({\bf u}_h,\boldsymbol \lambda_h) \in {\bf V}_h\times \boldsymbol \Lambda_h$ s.t. 
\begin{equation}\label{interf_discrete_VF}
\left \{
\begin{array}{r c l @{\hspace{0.3cm}} l}
{\bf a}({\bf u}_h,{\bf v}_h) - {\bf b}(\boldsymbol \lambda_h,{\bf v}_h) &=& \int_{\gamma} \rho \, v^{\gamma}_{h_{\gamma}}\, dx,  &\forall {\bf v}_h \in {\bf V}_h,\\ 
{\bf b}(\boldsymbol \mu_h,{\bf u}_h) &=& 0, &\forall \boldsymbol\mu_h \in \boldsymbol \Lambda_h.
\end{array}
\right. 
\end{equation}
First of all we want to enlighten the  interface structure of formulation \eqref{interf_discrete_VF}, starting from its algebraic form. We introduce the following bilinear forms
\begin{equation*}
\begin{array}{rclcrcl}
 a_i(u_h^i,v_h^i)&=& \int_{\Omega_i} \epsilon_{ox}  \nabla u_h^i \cdot \nabla v_h^i  \, dx dy,\hspace{0.5cm} &&a_{\gamma}(u_{h_{\gamma}}^{\gamma},v_{h_{\gamma}}^{\gamma})&=& d \int_{\gamma} \epsilon_{ch} ( u_{h_{\gamma}}^{\gamma})' ( v_{h_{\gamma}}^{\gamma})' \, dx  , \vspace{0.3cm}\\  
 b_i(\mu^i_h,u^i_h)&=& \int_{\gamma} \mu_{h}^i u_h^i \, dx,\hspace{0.5cm}
&&b^i_{\gamma}(\mu^i_h,u^{\gamma}_{h_{\gamma}})&=& \int_{\gamma} \mu_{h}^i u^{\gamma}_{h_{\gamma}} \, dx,
\end{array}
\end{equation*}
with $i=1,2$. Then, problem \eqref{interf_discrete_VF} can be written in matrix form as follows
\begin{equation}
 \begin{bmatrix}
  \mathbb{ A}_1 & 0 & -\mathbb{B}_1^T & 0 & 0\\
 0 & \mathbb{ A}_2 & 0 & -\mathbb{B}_2^T& 0 \\
   \mathbb{B}_1 & 0 &0& 0 & -\mathbb{B}^1_{\gamma}\\
  0 & 0 & \mathbb{B}_2 & 0  &- \mathbb{B}^2_{\gamma} \\
 0& 0 & ( \mathbb{B}^1_{\gamma})^T  & (\mathbb{B}^2_{\gamma})^T & \mathbb{A}_{\gamma}
 \end{bmatrix}
\begin{bmatrix}
\underline{u}_1 \\
\underline{u}_2 \\
\underline{\lambda}_1 \\
\underline{\lambda}_2 \\
\underline{u}_{\gamma}
 \end{bmatrix}
 =
 \begin{bmatrix}
  0 \\
  0\\
  0 \\
  0\\
  \underline{r}
 \end{bmatrix},
\end{equation}
where $\underline{u}_i$, $\underline{\lambda}_i$, $\underline{u}_{\gamma}$ denote the unknown coefficient vectors of $u_h^i$, $\lambda_h^i$, $u_{h_{\gamma}}^{\gamma}$, respectively, $\mathbb{A}_i$, $\mathbb{A}_{\gamma}$, $\mathbb{B}_i$, $\mathbb{B}_{\gamma}^i$ correspond to the different bilinear forms defined above, and $  \underline{r}$ is the vector corresponding to the right hand side.

Since $\mathbb{A}_i$'s are obviously positive definite, we can first eliminate the unknown vectors $\underline{u}_i$'s. Then, we can eliminate the unknown vectors $\underline{\lambda}_i$'s since it is easy to check that $\hbox{Ker} \ \mathbb{B}_i^T ={0} $. It leads to the following linear system acting only on the unknown vector $\underline{u}_{\gamma}$
\begin{equation}\label{shur}
 \Big ( \mathbb{A}_{\gamma} + \sum_{i=1}^2 (\mathbb{B}^i_{\gamma})^T (\mathbb{B}_i \mathbb{ A}_i^{-1} \mathbb{B}_i^T )^{-1}  \mathbb{B}^i_{\gamma} \Big ) \, \underline{u}_{\gamma} \, = \, \underline{r} .
\end{equation}
\medskip\noindent
We can reinterpret \eqref{shur} in terms of a bilinear form acting on $V^{\gamma}_{h_{\gamma}} \times V^{\gamma}_{h_{\gamma}}$. It is not difficult to see that, starting from the first equation of \eqref{interf_discrete_VF}, with ${\bf v}_h = (0,0,v^{\gamma}_{h_{\gamma}})$, we obtain the following problem:\\
Find $u^{\gamma}_{h_{\gamma}} \in V^{\gamma}_{h_{\gamma}}$ s.t.
\begin{equation}\label{interface}
 a_{\gamma} (u^{\gamma}_{h_{\gamma}},v^{\gamma}_{h_{\gamma}}) + \sum_{i=1}^2 b^i_{\gamma}(\lambda^i_h(u^{\gamma}_{h_{\gamma}}),v^{\gamma}_{h_{\gamma}}) = \int_{\gamma} \rho v^{\gamma}_{h_{\gamma}} \, dx \qquad \forall v^{\gamma}_{h_{\gamma}} \in V^{\gamma}_{h_{\gamma}}.
\end{equation}
For a given $u^{\gamma}_{h_{\gamma}}$, $\lambda^i_h(u^{\gamma}_{h_{\gamma}})$ in \eqref{interface} is the second component of the solution to the following 2D problem :\\
\noindent Find $(u_h^i(u^{\gamma}_{h_{\gamma}}),\lambda^i_h(u^{\gamma}_{h_{\gamma}})) \in V_h^i \times \Lambda_h^i$ s.t.
\begin{equation}\label{bulk}
\left \{
\begin{array}{r c l @{\hspace{0.3cm}} l}
a_i (u_h^i(u^{\gamma}_{h_{\gamma}}),v_h^i) -   b_i(\lambda^i_h(u^{\gamma}_{h_{\gamma}}),v^i_h) &=& 0  &\qquad \forall v^i_h \in V^i_h, \\
 b_i(\mu^i_h,u_h^i(u^{\gamma}_{h_{\gamma}})) &=& b^i_{\gamma}(\mu^i_h,u^{\gamma}_{h_{\gamma}})  
&\qquad \forall \mu_h^i \in \Lambda_h^i .
\end{array}
\right. 
\end{equation}
The matrix form of \eqref{interface} (with \eqref{bulk}) is indeed \eqref{shur}.

We see that in order to solve the interface problem \eqref{interface} (which is a well posed problem for a given $\lambda^i_h$), we need to solve the saddle point problems \eqref{bulk} for $i=1,2$, for which an inf-sup condition linking only $V_h^i$ and $\Lambda^i_h$ is required (without connection with $V_{\gamma}$). It was already anticipated in Remark \ref{Remark_inf_sup} and it will be explicit in the next subsection, where the analysis of  formulation \eqref{interf_discrete_VF} is done.

\begin{remark}
Notice that the 1D interface problem \eqref{interface} includes (through $\lambda^i_h(u^{\gamma}_{h_{\gamma}})$ ) a discrete Poincar\'e-Steklov operator that maps the Dirichlet datum $u^{\gamma}_{h_{\gamma}}$ on $\gamma$ into $\lambda^i_h$ which, as explained in \eqref{conormal} at the continuous level, is linked to the conormal derivative of $u^i_h$ on $\gamma$. Observing that the Poincar\'e-Steklov operator is an operator of order 1, with a strict analogy to the 1/2-Laplacian, it is related to \cite{EHM14}, where the specificity of the surface particle density leads to construct the self-consistent potential for a graphene sheet as the solution on the plane of a fractional Laplacian. \end{remark}

\subsection{Well-posedness and error estimates}

To prove existence and uniqueness of a solution to \eqref{interf_discrete_VF} as stated by Theorem \ref{Theorem_discrete}, we resort to Fortin's argument. More precisely, in order to obtain the discrete inf-sup condition (written later on in \eqref{discr_inf_sup}) as well as some error estimates,
 we introduce in the following Lemma a projector $\pi_h^i:$ $L^2(\gamma)$ 
 $\longrightarrow W^i_h$ for the interface functions, where 
$$W_{h}^i = V_{h}^i|_{\gamma}$$
is the trace space of $V^i_{h}$ on $\gamma$. Notice that, due to the homogeneous conditions on $\Gamma_D^i$ in the definition of $V_h^i$, a function $ w_h^i \in W_h^i$ satisfies $w_h^i(0)=w_h^i(L)=0$. So the dimension of $W_h^i$ is equal to the number of intervals of $ \mathcal{T}_{h_i}(\gamma)$ minus 1, matching the dimension of $\Lambda^i_h$.

\begin{lemma} \label{lemma_pi_gam}
There exists $\pi_h^i:$ $L^2(\gamma)$ 
 $\longrightarrow W^i_h $ defined by 
 \begin{equation} \label{pi_gam_orth}
 \int_{\gamma} \lambda^i_h (\pi^i_h \eta- \eta)\, dx =0, \quad \forall \lambda^i_h \in \Lambda^i_h   ,
 \end{equation} 
 such that, for all $\eta \in H^{1/2}_{00}(\gamma)$, 
 \begin{equation}
 ||\pi^i_h \eta||_{1/2,\gamma} \le C  || \eta||_{1/2,\gamma}. \label{pi_gam_bound}
 \end{equation}
 \end{lemma}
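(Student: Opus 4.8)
The plan is to split the proof into two parts: the well-posedness of the definition of $\pi_h^i$ through \eqref{pi_gam_orth}, and then the stability estimate \eqref{pi_gam_bound}. First I would use the dimension count recorded just before the statement: since $\dim W_h^i = \dim \Lambda_h^i$ equals the number of intervals of $\mathcal{T}_{h_i}(\gamma)$ minus one, relation \eqref{pi_gam_orth} is, for a given $\eta\in L^2(\gamma)$, a \emph{square} linear system for the coefficients of $\pi_h^i\eta$ in a basis of $W_h^i$, with matrix the cross-mass matrix $M_{jk}=\int_\gamma \psi_j\,\phi_k\,dx$ between a basis $\{\psi_j\}$ of $\Lambda_h^i$ and a basis $\{\phi_k\}$ of $W_h^i$. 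The heart of this step is to show $M$ is invertible uniformly in $h$, which I would phrase as the uniform discrete nondegeneracy (inf--sup) condition
$$\exists\, c>0:\qquad \sup_{\lambda_h\in\Lambda_h^i}\frac{\int_\gamma \lambda_h\, w_h\,dx}{\|\lambda_h\|_{0,\gamma}}\ \ge\ c\,\|w_h\|_{0,\gamma},\qquad \forall\, w_h\in W_h^i.$$
On the regular interface mesh this is a local, interval-by-interval computation: both bases are piecewise linear with local support, and the $\mathbb{P}^0$ modification on $e_0^i$ and $e_L^i$ is exactly what keeps the pairing nondegenerate up to the boundary, so that $M$ can be compared with the standard piecewise-linear mass matrix (spectrally equivalent to $h$ times the identity). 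Once \eqref{pi_gam_orth} is uniquely solvable, taking $\eta=w_h\in W_h^i$ forces $\pi_h^i w_h=w_h$, so $\pi_h^i$ is a projection onto $W_h^i$.

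The inf--sup condition immediately yields $L^2$-stability: writing $w=\pi_h^i\eta$ and using \eqref{pi_gam_orth},
$$\|\pi_h^i\eta\|_{0,\gamma}\ \le\ \frac1c \sup_{\lambda_h\in\Lambda_h^i}\frac{\int_\gamma \lambda_h\, w\,dx}{\|\lambda_h\|_{0,\gamma}} \ =\ \frac1c \sup_{\lambda_h\in\Lambda_h^i}\frac{\int_\gamma \lambda_h\, \eta\,dx}{\|\lambda_h\|_{0,\gamma}} \ \le\ \frac1c\,\|\eta\|_{0,\gamma}.$$
Since $\pi_h^i$ reproduces $W_h^i$, this also gives the $L^2$ approximation bound $\|\eta-\pi_h^i\eta\|_{0,\gamma}\le C\inf_{w_h\in W_h^i}\|\eta-w_h\|_{0,\gamma}\le C h^{1/2}\|\eta\|_{1/2,\gamma}$ for $\eta\in H^{1/2}_{00}(\gamma)$.

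To upgrade to the $H^{1/2}_{00}$-bound \eqref{pi_gam_bound}, I would interpolate via a stable quasi-interpolant $\Pi_h$ onto $W_h^i$ preserving the homogeneous endpoint values, satisfying $\|\Pi_h\eta\|_{1/2,\gamma}\le C\|\eta\|_{1/2,\gamma}$ and $\|\eta-\Pi_h\eta\|_{0,\gamma}\le Ch^{1/2}\|\eta\|_{1/2,\gamma}$. Splitting $\pi_h^i\eta=\Pi_h\eta+(\pi_h^i\eta-\Pi_h\eta)$, the second term lies in $W_h^i$, so the inverse inequality $\|w_h\|_{1/2,\gamma}\le Ch^{-1/2}\|w_h\|_{0,\gamma}$ on $W_h^i$ together with the two $L^2$ approximation estimates gives
$$\|\pi_h^i\eta-\Pi_h\eta\|_{1/2,\gamma}\ \le\ Ch^{-1/2}\big(\|\pi_h^i\eta-\eta\|_{0,\gamma}+\|\eta-\Pi_h\eta\|_{0,\gamma}\big)\ \le\ C\|\eta\|_{1/2,\gamma},$$
and \eqref{pi_gam_bound} follows by the triangle inequality. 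Equivalently, I could prove $H^1_0$-stability of $\pi_h^i$ directly and then invoke the characterization $H^{1/2}_{00}(\gamma)=[H^1_0(\gamma),L^2(\gamma)]_{1/2}$ from \cite{LM} to conclude by operator interpolation.

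The main obstacle will be the uniform-in-$h$ inf--sup condition of the first step, i.e.\ the uniform $L^2$-stability of $\pi_h^i$: this is exactly where the non-standard structure of $\Lambda_h^i$ (linear in the interior, constant on the two boundary intervals, with $\dim\Lambda_h^i=\dim W_h^i$) is essential, and where I expect to spend most of the effort. A second delicate point is that \eqref{pi_gam_bound} is stated in the Lions--Magenes norm $\|\cdot\|_{1/2,\gamma}$, which carries the weight $\operatorname{dist}(\cdot,\partial\gamma)^{-1/2}$ near the two endpoints of $\gamma$; the inverse inequality and the interpolant $\Pi_h$ must be controlled in this weighted norm, and the endpoint modification of the multiplier space is precisely what makes that control possible.
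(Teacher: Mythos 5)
Your proposal is correct and structurally the same as the paper's proof: stage one is a uniform $L^2$ inf--sup condition between $\Lambda^i_h$ and $W^i_h$ (relying on the matching dimensions and the $\mathbb{P}^0$ end-interval modification), giving well-posedness of $\pi^i_h$, $L^2$-quasi-optimality, and the bound $\|\eta-\pi^i_h\eta\|_{0,\gamma}\le Ch_i^{1/2}\|\eta\|_{1/2,\gamma}$; stage two climbs to \eqref{pi_gam_bound} via an inverse inequality plus an auxiliary $H^{1/2}_{00}$-stable approximation operator. The differences are in execution. For the inf--sup, the paper is more elementary than you anticipate: it simply exhibits $\overline{\lambda^i_h}(w^i_h)\in\Lambda^i_h$ equal to $w^i_h$ on interior intervals and constant (equal to the adjacent nodal value of $w^i_h$) on $e^i_0,e^i_L$, and checks \eqref{bb}--\eqref{cc} by a local computation, so no analysis of the cross-mass matrix is required. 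For stage two, you postulate a quasi-interpolant $\Pi_h$ that is simultaneously $H^{1/2}_{00}$-stable and has $O(h^{1/2})$ $L^2$-error -- exactly the point you yourself flag as delicate in the weighted Lions--Magenes norm -- whereas the paper takes the auxiliary operator $\widehat\pi^i_h$ to be the best approximation in the $H^{1/2}_{00}(\gamma)$ inner product, so its stability is trivial, and proves its $L^2$ error estimate by an Aubin--Nitsche duality argument (solving $(\chi(f),\xi)_{1/2,\gamma}=\int_\gamma f\xi\,dx$ and using the regularity bound $\|\chi(f)\|_{1,\gamma}\le\|f\|_{0,\gamma}$). That duality trick is precisely what removes the need for any explicit construction in the weighted norm; your own fallback route -- $L^2$- and $H^1_0$-stability of $\pi^i_h$ followed by operator interpolation via $H^{1/2}_{00}=[H^1_0,L^2]_{1/2}$ -- would serve the same purpose, so the delicate point you identify is fillable rather than a flaw in the approach.
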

 
\begin{proof}
For any $w_h^i \in W_h^i$, there exists $\overline{\lambda_h^i} (w_h^i) \in \Lambda_h^i$, such that
\begin{equation} \label{bb}
 \int_{\gamma} \overline{\lambda^i_h}(w_h^i) w_h^i\, dx \ge ||w^i_h||_{0,\gamma}^2,
\end{equation}
which implies uniqueness (and consequently existence) of the linear operator $\pi_h^i$. Indeed, by choosing $\overline{\lambda_h^i} (w_h^i){|_e}=w_h^i{|_e}$ for $e \in \mathcal{T}_{h_i}(\gamma)\backslash \{e_0^i,e_L^i\}$, noting that the constant value $\overline{\lambda_h^i} (w_h^i){|_{e^i_0}}$ coincides with $w_h^i$ evaluated in the end point of $e^i_0$ (and analogously for $\overline{\lambda_h^i} (w_h^i){|_{e^i_L}}$), we can easily obtain \eqref{bb} and the following bound
 \begin{equation} \label{cc}
  ||\overline{\lambda^i_h}(w_h^i)||_{0,\gamma} \le  C ||w^i_h||_{0,\gamma}.
 \end{equation}
 Then, \eqref{bb} and \eqref{cc} give 
\begin{equation} \label{coerc_bound1}
 \sup_{\lambda^i_h \in \Lambda^i_h} \frac {\int_{\gamma} \lambda^i_h w_h^i\, dx}{||\lambda^i_h||_{0,\gamma}} \ge \frac 1 C ||w^i_h||_{0,\gamma}.
\end{equation}
Moreover, by using \eqref{pi_gam_orth}, for each $\eta_h \in W_h^i$, \eqref{coerc_bound1}  gives
\begin{equation*} \label{err_eta}
 ||\eta_h - \pi_h^i \eta||_{0,\gamma} \le C \sup_{\lambda^i_h \in \Lambda^i_h} \frac {\int_{\gamma} \lambda^i_h ( \eta_h - \pi_h^i \eta)\, dx}{||\lambda^i_h||_{0,\gamma}} = C \sup_{\lambda^i_h \in \Lambda^i_h} \frac {\int_{\gamma} \lambda^i_h (\eta_h - \eta)\, dx}{||\lambda^i_h||_{0,\gamma}} \le C ||\eta - \eta_h||_{0,\gamma}.
\end{equation*}
Consequently, using a triangular inequality and classical approximation results, see \cite{Ci02} e.g., we obtain
\begin{equation} \label{bound:1/2}
 ||\eta -  \pi_h^i\eta||_{0,\gamma} \le C \inf_{\eta_h \in W^i_h} ||\eta - \eta_h||_{0,\gamma} \le C h_i^{1/2} ||\eta||_{1/2,\gamma}.
\end{equation}

Next, we can obtain the uniform bound  \eqref{pi_gam_bound} with the same argument as in \cite[Lemma 2]{B03}. For completeness, we give a sketch of the proof. We introduce $\widehat \pi_h^i:$ $H^{1/2}_{00}(\gamma)$ 
 $\longrightarrow W^i_h $ as the projection induced by the $H^{1/2}_{00}(\gamma)$ norm, defined by
 $$
 ||\widehat \pi_h^i \eta - \eta||_{1/2,\gamma} = \inf_{\eta_h \in W_h^i} ||\eta - \eta_h||_{1/2,\gamma}, \qquad \eta \in H^{1/2}_{00}(\gamma).
 $$
 Since trivially $||\widehat \pi_h^i \eta ||_{1/2,\gamma} \le C || \eta||_{1/2,\gamma}$, by applying triangular inequalities and a classical inverse inequality, we obtain
 \begin{eqnarray} \label{bound:pi}
  ||\pi_h^i \eta ||_{1/2,\gamma}   &\le &C || \eta||_{1/2,\gamma} + h_i^{-1/2} (||\pi_h^i \eta -  \eta||_{0,\gamma}  +||\eta - \widehat \pi_h^i \eta||_{0,\gamma}).
 \end{eqnarray}
 Using classical duality arguments, it is possible to prove that 
 \begin{equation} \label{bound:pihat}
 ||\eta - \widehat \pi_h^i \eta||_{0,\gamma} \le C h_i^{1/2} ||\eta ||_{1/2,\gamma}.
 \end{equation}
Indeed, let $\chi(f) \in H^{1/2}_{00} (\gamma)$ be the solution of the problem
 \begin{equation} \label{pb:dual}
  (\chi(f), \xi)_{1/2,\gamma} = \int_{\gamma} f \xi dx \qquad \forall \xi \in H^{1/2}_{00}(\gamma).
 \end{equation}
If $f \in L^2(\gamma)$ then $\chi(f) \in H^1_0(\gamma)$ and the following bound holds
\begin{equation} \label{bound:chi}
||\chi(f)||_{1,\gamma} \le ||f||_{0,\gamma}.
\end{equation}
Since $(\chi_h, \eta - \widehat \pi_h^i \eta)_{1/2,\gamma} =0$ for all $\chi_h \in W^i_h$, starting from  \eqref{pb:dual} with $f=\xi=\eta - \widehat \pi_h^i \eta$, we obtain
\begin{eqnarray*}
 ||\eta - \widehat \pi_h^i \eta||_{0,\gamma}^2 = (\chi(\eta - \widehat \pi_h^i \eta), \eta - \widehat \pi_h^i \eta)_{1/2,\gamma} &=&  \inf_{\chi_h \in W^i_h}  (\chi(\eta - \widehat \pi_h^i \eta) - \chi_h, \eta - \widehat \pi_h^i \eta)_{1/2,\gamma}.
 \end{eqnarray*}
 Then, using a classical approximation result, it gives the following estimate
 \begin{eqnarray*}
 ||\eta - \widehat \pi_h^i \eta||_{0,\gamma}^2 &\le& \inf_{\chi_h \in W^i_h} ||\chi(\eta - \widehat \pi_h^i \eta) -\chi_h ||_{1/2,\gamma}   ||\eta - \widehat \pi_h^i \eta||_{1/2,\gamma} \\
 & \le & C h_i^{1/2} ||\chi(\eta - \widehat \pi_h^i \eta)||_{1,\gamma} ||\eta - \widehat \pi_h^i \eta||_{1/2,\gamma} .
\end{eqnarray*}
Thus \eqref{bound:pihat} follows, thanks to \eqref{bound:chi} and to the boundedness of $||\widehat \pi_h^i \eta||_{1/2,\gamma}$.
 
Finally, using \eqref{bound:1/2} and \eqref{bound:pihat} in \eqref{bound:pi}, we obtain the bound \eqref{pi_gam_bound} and conclude the proof.

\end{proof}

\begin{remark}
A projector with similar properties could be also obtained for a more general definition of $\Lambda_h^i$ (and of $W_h^i$). For instance, $\Lambda_h^i$ could be defined on a decomposition  $\mathcal{E}_{h_{\lambda}}(\gamma)$ different from $\mathcal{T}_{h_i}(\gamma)$, with the only requirement that all nodes of $\mathcal{E}_{h_{\lambda}}(\gamma)$ are also nodes of $\mathcal{T}_{h_i}(\gamma)$ and $W_h^i$ could be a subspace of the trace space $V_{h_i}^i|_{\gamma}$ defined on the same decomposition $\mathcal{E}_{h_{\lambda}}(\gamma)$. Since, for our application, $\lambda_h^i$ plays only the role of a working quantity, we do not discuss this issue further.
\end{remark}

\begin{theorem}\label{Theorem_discrete}
 For $\rho \in L^2(\gamma)$, there exists a unique solution $({\bf u}_h,\boldsymbol \lambda_h) \in {\bf V}_h \times \boldsymbol \Lambda_h$ to problem \eqref{interf_discrete_VF}.  Moreover, the following error estimate holds
 \begin{equation}\label{error_bound}
|| {\bf u}-{\bf u}_h||_{\bf V} + ||\boldsymbol\lambda - \boldsymbol\lambda_h||_{\boldsymbol \Lambda} \le C \big( \inf_{{\bf v}_h \in {\bf V}_h} ||{\bf u}-{\bf v}_h||_{\bf V} + \inf_{\boldsymbol\mu_h \in \boldsymbol \Lambda_h} ||\boldsymbol\lambda - \boldsymbol\mu_h||_{\boldsymbol \Lambda} \big) .
\end{equation} 
 \end{theorem}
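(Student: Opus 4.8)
The plan is to invoke the abstract theory of saddle point problems \cite{BF}, for which three ingredients must be verified at the discrete level, uniformly in $h$: coercivity of ${\bf a}$ on the discrete kernel, continuity of ${\bf a}$ and ${\bf b}$, and a discrete inf-sup condition for ${\bf b}$ on ${\bf V}_h \times \boldsymbol \Lambda_h$. Continuity is inherited verbatim from Lemma \ref{lemma1}, since ${\bf V}_h \subset {\bf V}$ and $\boldsymbol \Lambda_h \subset \boldsymbol \Lambda$. Coercivity is even easier here: because \eqref{lemma1:two} holds on the whole space ${\bf V}$, it holds a fortiori on ${\bf V}_h$, hence on the discrete kernel, with the same constant $\alpha_2$ independent of $h$. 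This is precisely the structural advantage anticipated in Remark \ref{Remark_inf_sup}, and it is what lets the whole argument reduce to establishing the uniform discrete inf-sup condition.

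The heart of the proof is therefore the discrete inf-sup estimate, which I would obtain by Fortin's argument: exhibit an operator $\Pi_h : {\bf V} \to {\bf V}_h$, uniformly bounded in the ${\bf V}$-norm, satisfying the commuting relation ${\bf b}(\boldsymbol \mu_h, {\bf v} - \Pi_h {\bf v}) = 0$ for all ${\bf v} \in {\bf V}$ and all $\boldsymbol \mu_h \in \boldsymbol \Lambda_h$; granting this, the continuous inf-sup \eqref{lemma1:four} transfers to ${\bf V}_h \times \boldsymbol \Lambda_h$ with constant $\beta / \|\Pi_h\|$. I would build $\Pi_h$ componentwise. For the interface component I set $(\Pi_h {\bf v})_\gamma = Q_h v_\gamma$, with $Q_h$ any stable quasi-interpolant onto $V^\gamma_{h_\gamma}$. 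For each oxide component I exploit the projector $\pi_h^i$ of Lemma \ref{lemma_pi_gam}: setting $\eta_i = v_i|_\gamma - v_\gamma + Q_h v_\gamma \in H^{1/2}_{00}(\gamma)$, I prescribe the trace $(\Pi_h {\bf v})_i|_\gamma = \pi_h^i \eta_i \in W_h^i$ and then extend it into a function of $V_{h_i}^i$ by a discrete lifting. Using the orthogonality \eqref{pi_gam_orth} together with the fact that discrete multipliers act as integrals \eqref{duality_product_discr}, each contribution $\int_\gamma \mu_i\big[(\Pi_h{\bf v})_i|_\gamma - (\Pi_h{\bf v})_\gamma\big]$ reproduces $\int_\gamma \mu_i (v_i|_\gamma - v_\gamma)$, so the commuting relation holds.

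For the uniform bound I would combine the $H^{1/2}_{00}(\gamma)$-stability \eqref{pi_gam_bound} of $\pi_h^i$ with the stability of $Q_h$ and a trace inequality, arriving at $\|(\Pi_h {\bf v})_i\|_{1,\Omega_i} \le C \|\pi_h^i \eta_i\|_{1/2,\gamma} \le C \|\eta_i\|_{1/2,\gamma} \le C \|{\bf v}\|_{\bf V}$. I expect the main obstacle to lie exactly in the last extension step: producing a function in $V_{h_i}^i$ whose trace on $\gamma$ is the prescribed element of $W_h^i$ and whose $H^1(\Omega_i)$-norm is controlled by the $H^{1/2}_{00}(\gamma)$-norm of that trace, uniformly in $h$. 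This is a standard but delicate discrete extension (for instance a discrete harmonic extension on the shape-regular family $\mathcal{T}_{h_i}(\Omega_i)$), and it is where shape regularity is genuinely needed. Once $\Pi_h$ is available, existence and uniqueness of $({\bf u}_h, \boldsymbol \lambda_h)$ follow from \cite{BF}, and the same theory yields the quasi-optimal Céa-type bound \eqref{error_bound}, with $C$ depending only on $\alpha_1$, $\alpha_2$, $M$ and the uniform discrete inf-sup constant.
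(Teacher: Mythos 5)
Your proof is correct, and it rests on exactly the same two pillars as the paper's argument: the projector $\pi_h^i$ of Lemma \ref{lemma_pi_gam} and the stability of the discrete harmonic lifting (the paper cites \cite[Lemma 3.2]{BPS86} for the bound $\|\Pi_h^i v_i\|_{1,\Omega_i} \le C \|\pi_h^i v_i\|_{1/2,\gamma}$), combined with coercivity inherited from ${\bf V}_h \subset {\bf V}$ and the saddle-point theory of \cite{BF}. The genuine difference is in how the inf-sup verification is organized. You build a \emph{global} Fortin operator on all of ${\bf V}$, which forces you to handle the interface component: you introduce a quasi-interpolant $Q_h$ onto $V^{\gamma}_{h_{\gamma}}$ and the corrected trace $\eta_i = v_i|_{\gamma} - v_{\gamma} + Q_h v_{\gamma}$ so that the commuting property holds for the full form ${\bf b}$ --- a neat trick that does work, since the orthogonality \eqref{pi_gam_orth} makes the $Q_h v_{\gamma}$ contributions cancel exactly; but it costs you two extra hypotheses, namely the $H^{1/2}_{00}(\gamma)$-stability of $Q_h$ and (implicitly) the fact that $\pi_h^i \eta_i$ vanishes at the endpoints of $\gamma$ so that its discrete lifting lands in $V_{h_i}^i$ (true, because $W_h^i$ consists of traces of functions vanishing on $\Gamma_D^i$). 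The paper avoids the interface component altogether: exactly as in the continuous Lemma \ref{lemma1}, it tests ${\bf b}$ with ${\bf v}_h = (v_h^1,0,0)$ and $(0,v_h^2,0)$, so the discrete inf-sup condition \eqref{discr_inf_sup} reduces to the two per-subdomain inequalities \eqref{ineq_lambda_i_discr} relating $\Lambda_h^i$ to $V_h^i$ only, and the Fortin operator needed is just $\Pi_h^i : V^i \to V_h^i$, the discrete lifting of $\pi_h^i(v_i|_{\gamma})$, with no $Q_h$ and no correction term. Besides being shorter, this decoupling is precisely what makes transparent the point stressed in Remark \ref{Remark_inf_sup}: the compatibility constraint never involves $V^{\gamma}_{h_{\gamma}}$, so the interface grid can be refined independently of the subdomain meshes --- a flexibility your construction in fact retains (any stable $Q_h$ onto any choice of $V^{\gamma}_{h_{\gamma}}$ works) but hides inside the operator. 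Both routes yield an $h$-independent inf-sup constant and then the quasi-optimal bound \eqref{error_bound} from \cite{BF}.
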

 \begin{proof}
Since the embedding ${\bf V}_h\subset {\bf V}$ implies immediately the coerciveness 
 \begin{equation}\label{coerc_h}
 {\mathbf a}({\bf u}_h,{\bf u}_h) \ge \alpha_2 ||{\bf u}_h||^2_{{\bf V}} \quad \forall {\bf u}_h \in {\bf V}_h,
 \end{equation}
we only have to prove the discrete inf-sup condition
\begin{equation}\label{discr_inf_sup}
\exists \delta>0, \quad \inf_{\boldsymbol \lambda_h \in \boldsymbol \Lambda_h} \sup_{{\bf v}_h\in {\bf V}_h} \frac{{\bf b}(\boldsymbol \lambda_h,{\bf v}_h)}{\| {\bf v_h} \|_{\bf V} \| \boldsymbol\lambda_h\|_{\boldsymbol \Lambda}} \geq \delta .
\end{equation}
As for the continuous case, it is enough to show that
it exists $\delta>0$ such that, for all $\lambda^i_h \in \Lambda^i_h$,
\begin{equation}\label{ineq_lambda_i_discr} 
||\lambda^i_h ||_{-1/2, \gamma} \leq \delta \sup_{v^i_h\in V^i_h} \frac { \int_{\gamma} \lambda^i_h v^i_h dx  }{||v^i_h||_{1,\Omega_i} },
\end{equation}
where we used \eqref{duality_product_discr}. This inequality  relies on the existence of a bounded projector 
$\Pi_h^i:$ $V^i
 \longrightarrow V^i_h $ such that, for all $v_i \in V^i$,
 \begin{eqnarray}
\int_{\gamma} \lambda^i_h (\Pi^i_h v_i - v_i ) dx &=&0 \quad \forall \lambda^i_h \in \Lambda^i_h \label{Pi_orth} ,\\
 ||\Pi^i_h v_i||_{1,\Omega_i} &\le& \kappa  || v_i||_{1,\Omega_i} \label{Pi_bound},
 \end{eqnarray}
 with $\kappa >0$  constant independent of $h$. Given $v_i \in V^i$, let us define $\Pi^i_h v_i$ in $V_h^i$ as the discrete lifting of $\pi^i_h v_i$, that is  
 \begin{eqnarray*}
a_i(\Pi^i_h v_i,v_h^i) &=&0 \quad \forall v_h^i \in V_h^i ,\\
( \Pi^i_h v_i )|_{\gamma} &=& \pi^i_h (v_i |_{\gamma} ). 
 \end{eqnarray*}
We emphasize that, due to the homogeneous boundary condition on $\Gamma_D^i$, such a lifting exists thanks to the null value of
$\pi^i_h v_i$ at the end points of $\gamma$.
\eqref{Pi_orth} is trivially satisfied because of \eqref{pi_gam_orth}. The bound \eqref{Pi_bound} is obtain combining the classical result $||\Pi^i_h v_i||_{1,\Omega_i} \le C ||\pi_h^i v_i ||_{1/2,\gamma}$ (that can be found e.g. in \cite[Lemma 3.2.]{BPS86}), the bound \eqref{pi_gam_bound} that holds since $v_i |_{\gamma} \in H^{1/2}_{00}(\gamma)$ and the trace theorem.

\noindent Now, since $\Lambda^i_h \subset \Lambda_i$, we have 
\begin{equation*}
||\lambda^i_h ||_{-1/2, \gamma} \leq \sup_{v_i\in V^i} \frac { \int_{\gamma} \lambda^i_h  v_i  dx   }{||v_i||_{1,\Omega_i} }.
\end{equation*}
Using \eqref{Pi_orth} and \eqref{Pi_bound} , we obtain that, for all  $v_i\in V^i$, 
\begin{equation*}
 \frac { \int_{\gamma} \lambda^i_h  v_i  dx}  {||v_i||_{1,\Omega_i} } \leq \kappa \frac { \int_{\gamma} \lambda^i_h  \Pi_h^i v_i  dx  }{||\Pi_h^i v_i||_{1,\Omega_i} } \leq  \kappa \sup_{v^i_h\in V^i_h} \frac { \int_{\gamma}  \lambda^i_h  v^i_h dx }{||v^i_h||_{1,\Omega_i} }. 
 \end{equation*}
 Thus, \eqref{ineq_lambda_i_discr}  follows with $\delta =\kappa$.
The discrete inf-sup condition \eqref{discr_inf_sup} and the coerciviness \eqref{coerc_h} imply existence and uniqueness of the discrete solution as well as the error bound \eqref{error_bound}.
\end{proof}

\begin{remark}
As for the continuous case and as suggested by the formulation \eqref{interface}-\eqref{bulk}, the discrete inf-sup condition \eqref{discr_inf_sup} relates only the Lagrange multipliers space $\Lambda^i_h$ to the space $V^i_h$. It allows us to choose $V^{\gamma}_{h_{\gamma}}$ independently of the other spaces. This point is particularly interesting when the density $\rho$ appearing in the second member of \eqref{interf_discrete_VF} requires a refinement of the interface discretization.
\end{remark}

\noindent An optimal order of convergence is obtained for a regular $u_i$ as in the next theorem.

\begin{theorem}\label{Th_final_err_est}
Assume  $u_i \in H^2(\Omega_i)$, $i=1,2$. Then, 
$$ ||{\bf u}-{\bf u}_h||_{\bf V} + ||\boldsymbol \lambda - \boldsymbol\lambda_h||_{\boldsymbol \Lambda} \le C \Big(\sum_i h_i ^2 ||u_i||^2_{2,\Omega_i} + h_{\gamma}^2 ||u_\gamma||^2_{2,\gamma} \Big)^{1/2}.
 $$
\end{theorem}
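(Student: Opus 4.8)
The plan is to combine the abstract quasi-optimal bound \eqref{error_bound} of Theorem \ref{Theorem_discrete} with standard finite element approximation estimates, handling separately the potential component in ${\bf V}$ and the, more delicate, multiplier component in $\boldsymbol\Lambda$. For the potential term, since ${\bf V}_h$ is built from continuous piecewise $\mathbb{P}^1$ functions on the shape-regular families $\mathcal{T}_{h_i}(\Omega_i)$ and $\mathcal{E}_{h_\gamma}(\gamma)$, the hypothesis $u_i\in H^2(\Omega_i)$ together with the classical interpolation estimate (see \cite{Ci02}) gives at once $\inf_{v^i_{h}\in V^i_{h_i}}\|u_i-v^i_{h}\|_{1,\Omega_i}\le C h_i\|u_i\|_{2,\Omega_i}$ and, analogously, $\inf_{v^\gamma_{h_\gamma}}\|u_\gamma-v^\gamma_{h_\gamma}\|_{1,\gamma}\le C h_\gamma\|u_\gamma\|_{2,\gamma}$. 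Here I would first point out that $u_\gamma\in H^2(\gamma)$ is not an additional hypothesis but a consequence of $u_i\in H^2(\Omega_i)$: from the effective equation in \eqref{interf_pb_value_pb}, with $\rho\in L^2(\gamma)$ and the conormal traces $\nabla u_i\cdot n_i\in H^{1/2}(\gamma)\subset L^2(\gamma)$, one gets $u_\gamma''\in L^2(\gamma)$. Summing these contributions controls $\inf_{{\bf v}_h\in{\bf V}_h}\|{\bf u}-{\bf v}_h\|_{\bf V}$ by $C\big(\sum_i h_i^2\|u_i\|_{2,\Omega_i}^2+h_\gamma^2\|u_\gamma\|_{2,\gamma}^2\big)^{1/2}$.

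The crux is the best approximation of the multiplier $\inf_{\boldsymbol\mu_h\in\boldsymbol\Lambda_h}\|\boldsymbol\lambda-\boldsymbol\mu_h\|_{\boldsymbol\Lambda}$, which must be measured in the \emph{negative} norm $\|\cdot\|_{-1/2,\gamma}$. The starting point is the identification \eqref{conormal}, showing that each $\lambda_i$ coincides on $\gamma$ with the conormal derivative $\epsilon_{ox}\nabla u_i\cdot n_i$; since $u_i\in H^2(\Omega_i)$, the trace theorem yields $\lambda_i\in H^{1/2}(\gamma)$ with $\|\lambda_i\|_{1/2,\gamma}\le C\|u_i\|_{2,\Omega_i}$. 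I would then establish the negative-norm estimate
$$\inf_{\mu^i_h\in\Lambda^i_h}\|\lambda_i-\mu^i_h\|_{-1/2,\gamma}\le C h_i\|\lambda_i\|_{1/2,\gamma}$$
by taking $\mu^i_h$ to be the $L^2(\gamma)$-orthogonal projection of $\lambda_i$ onto $\Lambda^i_h$ and running an Aubin--Nitsche duality argument: using the equivalent dual norm from Remark \ref{remark_equivalency}, for any $\sigma\in H^{1/2}_{00}(\gamma)$ the $L^2$-orthogonality of the projection lets me subtract an arbitrary $\mu_h\in\Lambda^i_h$ inside the pairing, so that $\int_\gamma(\lambda_i-\mu^i_h)\sigma\,dx=\int_\gamma(\lambda_i-\mu^i_h)(\sigma-\mu_h)\,dx\le\|\lambda_i-\mu^i_h\|_{0,\gamma}\,\|\sigma-\mu_h\|_{0,\gamma}$. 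Bounding each $L^2$ error by the $H^{1/2}$ approximation rate $Ch_i^{1/2}\|\cdot\|_{1/2,\gamma}$ and dividing by $\|\sigma\|_{1/2,\gamma}$ supplies the full factor $h_i$, hence $\inf_{\mu^i_h}\|\lambda_i-\mu^i_h\|_{-1/2,\gamma}\le C h_i\|u_i\|_{2,\Omega_i}$.

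Finally, inserting the two estimates into \eqref{error_bound} and using that a finite sum of nonnegative numbers is bounded by $C$ times its Euclidean norm produces exactly the announced bound. The main obstacle is the multiplier estimate: it is the only step that is not a direct application of a textbook interpolation result, and the duality argument is genuinely needed, since the naive bound $\|\lambda_i-\mu^i_h\|_{-1/2,\gamma}\le C\|\lambda_i-\mu^i_h\|_{0,\gamma}\le Ch_i^{1/2}\|\lambda_i\|_{1/2,\gamma}$ would yield only the suboptimal rate $h_i^{1/2}$; it is precisely the Aubin--Nitsche duality that recovers the extra half power. Some care is also required in checking that the endpoint modification of $\Lambda^i_h$ (piecewise constant on the two limit intervals $e_0^i,e_L^i$) does not spoil the $O(h_i^{1/2})$ $L^2$-approximation of $H^{1/2}_{00}(\gamma)$ functions, which can be settled through a Scott--Zhang type quasi-interpolant compatible with that modification.
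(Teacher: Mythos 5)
Your proposal is correct and follows essentially the same route as the paper: quasi-optimality from \eqref{error_bound}, standard Lagrange interpolation for the $\mathbf{V}$-component, and for the multiplier the identification $\lambda_i=\epsilon_{ox}\nabla u_i\cdot n_i\in H^{1/2}(\gamma)$ combined with the $L^2$-projection onto $\Lambda^i_h$ and a duality trick that uses the projection's orthogonality twice to upgrade $h_i^{1/2}$ to $h_i$ in the $\|\cdot\|_{-1/2,\gamma}$ norm. The only cosmetic difference is that you phrase the duality via the equivalent dual norm over $\sigma\in H^{1/2}_{00}(\gamma)$ from Remark \ref{remark_equivalency}, while the paper works directly with the defining supremum over $v_i\in V^i$, inserting $\mathcal{P}^i_{h_i}(v_i)$ and invoking the trace theorem.
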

 
\begin{proof}
If $u_i \in H^2(\Omega_i)$ then $\epsilon_{ox} \nabla u_i \cdot n_i \in H^{1/2}(\gamma)$ and  $u_{\gamma}\in H^2(\gamma)$. Let us introduce the Lagrange interpolation operators $\mathcal{I}^i_{h_i}:$ $V^i\longrightarrow V_{h_i}^i$ and $\mathcal{I}^\gamma_{h_{\gamma}}:$ $V^\gamma\longrightarrow V_{h_{\gamma}}^\gamma$, and the $L^2$-projector $\mathcal{P}^i_{h_i}:$ $L^2(\gamma) \longrightarrow \Lambda_{h_i}^i$, for which the following bounds are classical
$$ \| u_i - \mathcal{I}^i_{h_i} u_i \|_{1,\Omega_i} \leq C h_i ||u_i||_{2,\Omega_i},$$
$$ \| u_\gamma - \mathcal{I}^\gamma_{h_{\gamma}} u_\gamma \|_{1,\gamma} \leq C h_{\gamma} ||u_\gamma||_{2,\gamma},$$
$$ \| \lambda_i - \mathcal{P}^i_{h_i} \lambda_i \|_{0,\gamma} \leq C h_i^{1/2} \|\lambda_i\|_{1/2,\gamma}.$$
For completing the bound for the ${\boldsymbol \lambda}$ component of the error we again follow \cite{B03}.  Due to the regularity of $\epsilon_{ox} \nabla u_i \cdot n_i $, equation \eqref{conormal} becomes, for $v_i \in V^i$,
\begin{equation*}
<\lambda_i,v_i>_{\gamma} =\int_{\gamma} \epsilon_{ox} \nabla u_i \cdot n_i \, v_i \, dx .
\end{equation*}
Thus, we can write
\begin{equation*}
\| \lambda_i \|_{-1/2,\gamma} \leq \sup_{ v_i \in V^i} \frac {\int_{\gamma} \epsilon_{ox} \nabla u_i \cdot n_i \, v_i \, dx} {||v_i||
_{1,\Omega_i}}.
\end{equation*}
It gives
\begin{eqnarray*}
\inf_{\mu_{h_i}^i \in \Lambda^i_{h_i}} \| \lambda_i - \mu_{h_i}^i\|_{-1/2,\gamma} &\leq&  \sup_{{v_i \in V^i}} \frac {\int_{\gamma} \big(\epsilon_{ox} \nabla u_i \cdot n_i-\mathcal{P}^i_{h_i} (\epsilon_{ox} \nabla u_i \cdot n_i) \big) v_i \, dx} {||v_i||_{1,\Omega_i}} \\
&=& \sup_{{v_i \in V^i}} \frac {\int_{\gamma} \big(\epsilon_{ox} \nabla u_i \cdot n_i-\mathcal{P}^i_{h_i} (\epsilon_{ox} \nabla u_i \cdot n_i) \big) (v_i - \mathcal{P}^i_{h_i}( v_i) ) \, dx} {||v_i||_{1,\Omega_i}} \\
&\leq& h_i \|\epsilon_{ox} \nabla u_i \cdot n_i \|_{1/2,\gamma} \sup_{v_i \in V^i}  \frac { \|v_i\|_{1/2,\gamma}} {||v_i||_{1,\Omega_i}} \leq  C h_i  \|u_i\|_{2,\Omega_i}. 
\end{eqnarray*}
Recalling then \eqref{error_bound}, the desired estimate easily follows.
\end{proof}


\section{Robin type condition at interface}\label{section_robin}

We now consider the Robin type condition introduced in \eqref{eq_continuity2} rather than the condition \eqref{eq_continuity}. It is especially interesting to tackle an anisotropic channel permittivity given by the diagonal tensor \eqref{perm_tensor}.  With homogeneous Dirichlet conditions on $\Gamma_D$, the problem writes\\
{\it Find $(u_1,u_2,u_{\gamma})$ s.t.}
\begin{equation}\label{interf_pb_value_pb_robin}
\left \{
\begin{array}{r c l @{\hspace{1cm}} l}
- \nabla \cdot  (\epsilon_{ox}  \nabla u_i) &=& 0  &\hbox{in}~ \Omega_i, ~~i=1,2,\\
- d (\epsilon_{//} u_{\gamma}' )'  &=&  \rho -  \epsilon_{ox}(\nabla u_1\cdot n_1 +  \nabla  u_2 \cdot n_2)  & \hbox{on}~ \gamma,\\
(u_i-u_{\gamma})+\alpha \ \epsilon_{ox} \nabla u_i \cdot n_i &=& 0, &\hbox{on} ~\gamma,\\
u_{i}&=&0 &\hbox{ on } \Gamma^i_D, \\
\epsilon_{ox}\nabla u_{i} \cdot n_i &=&0 &\hbox{ on } \Gamma^i_N,\\
u_{\gamma}(0)=u_{\gamma}(L)&=&0.
\end{array}
\right. 
\end{equation}
We introduce the space $\boldsymbol Q=\big(L^2 (\gamma)\big)^2 \subset \boldsymbol \Lambda$ and we consider the following variational problem:\\
{\bf Variational formulation - Robin type condition:}\\
\noindent  Find $({\bf u},\boldsymbol \lambda) \in {\bf V} \times \boldsymbol Q$ s.t.
\begin{equation}\label{interf_VF_robin}
\left \{
\begin{array}{r  c c c l @{\hspace{1cm}} l}
{\bf a}({\bf u},{\bf v}) &-& {\bf b}(\boldsymbol \lambda,{\bf v}) &=& \int_{\gamma} \rho \, v_{\gamma}\, dx,  & \forall {\bf v} \in {\bf V}, \\ 
{\bf b}(\boldsymbol \mu,{\bf u}) &+& \alpha \  {\bf c}(\boldsymbol \lambda,\boldsymbol  \mu) &=& 0, & \forall \boldsymbol \mu \in {\bf Q},
\end{array}
\right. 
\end{equation}
where ${\bf c}$ is the bilinear form defined by
$${\bf c}(\boldsymbol \lambda,\boldsymbol  \mu) = \sum_{i=1}^2 \int_{\gamma} \lambda_i \mu_i \ dx.$$
Notice that due to the bilinear form ${\bf c}$, this formulation requires more regularity on the Lagrange multipliers $\boldsymbol \lambda$, compared to \eqref{interf_VF}.

The problem is coercive on $\boldsymbol V \times \boldsymbol Q$, so we easily have existence and uniqueness of the solution by Lax-Milgram theorem, with the following  bound
\begin{equation*}
\|{\bf u}  \|_{\bf V}^2 + \alpha \| \boldsymbol \lambda\|^2_{\bf Q} \leq C \|\rho\|_{0,\gamma}^2.
\end{equation*}
In order to retrieve a control on $\boldsymbol \lambda$ independent on $\alpha$ (and thus on the effective dielectric thickness $d$), following \cite{BF}, we can use the inf-sup condition on the largest space $\boldsymbol \Lambda$ introduced in \eqref{lemma1:four}. Indeed,
\begin{eqnarray*}
\beta \|\boldsymbol \lambda \|_{\boldsymbol \Lambda} &\leq& \sup_{{\bf v} \in {\bf V}} \frac{{\bf b}(\boldsymbol \lambda,{\bf v} )}{||{\bf v}||_{\bf V}}
= \sup_{{\bf v} \in {\bf V}} \frac{{\bf a}({\bf u},{\bf v}) -  \int_{\gamma} \rho \, v_{\gamma}\, dx}{||{\bf v}||_{{\bf V}}} \leq C ( ||{\bf u}||_{\bf V} +  \|\rho\|_{0,\gamma} ).
\end{eqnarray*}
Putting the two estimates together, we obtain the result summarized in the following theorem:

\begin{theorem}
 For $\rho \in L^2(\gamma)$, there exists a unique  solution $({\bf u},\boldsymbol \lambda) \in {\bf V} \times \boldsymbol Q$ to problem \eqref{interf_VF_robin}. Moreover, the following bound holds
\begin{equation}
 \|{\bf u}  \|_{\bf V}^2 + \|\boldsymbol \lambda \|_{\boldsymbol \Lambda} ^2+ \alpha \| \boldsymbol \lambda\|^2_{\bf Q} \leq C \|\rho\|_{0,\gamma}^2.
\end{equation}
\end{theorem}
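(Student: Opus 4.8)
The plan is to treat \eqref{interf_VF_robin} not as a genuine saddle-point problem but as a \emph{coercive} problem on the full product space ${\bf V} \times \boldsymbol Q$, exploiting that the additional bilinear form ${\bf c}$ regularizes the $\boldsymbol\lambda$-block. Existence, uniqueness and a first (though $\alpha$-dependent) stability estimate would then follow directly from the Lax--Milgram theorem. In a second, independent step I would recover an $\alpha$-uniform control of $\boldsymbol\lambda$ in the weaker norm $\|\cdot\|_{\boldsymbol\Lambda}$ by invoking the inf-sup condition \eqref{lemma1:four} of Lemma \ref{lemma1}, which is available precisely because $\boldsymbol Q \subset \boldsymbol \Lambda$.

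For the coercivity step, I would introduce the global bilinear form
$$
\mathcal{B}\big(({\bf u},\boldsymbol\lambda),({\bf v},\boldsymbol\mu)\big) = {\bf a}({\bf u},{\bf v}) - {\bf b}(\boldsymbol\lambda,{\bf v}) + {\bf b}(\boldsymbol\mu,{\bf u}) + \alpha\,{\bf c}(\boldsymbol\lambda,\boldsymbol\mu)
$$
on $({\bf V}\times\boldsymbol Q)\times({\bf V}\times\boldsymbol Q)$ and test it on the diagonal $({\bf v},\boldsymbol\mu)=({\bf u},\boldsymbol\lambda)$. The two off-diagonal contributions $-{\bf b}(\boldsymbol\lambda,{\bf u})$ and $+{\bf b}(\boldsymbol\lambda,{\bf u})$ cancel, leaving ${\bf a}({\bf u},{\bf u}) + \alpha\,{\bf c}(\boldsymbol\lambda,\boldsymbol\lambda)$, which by the coercivity \eqref{lemma1:two} of ${\bf a}$ and the identity ${\bf c}(\boldsymbol\lambda,\boldsymbol\lambda)=\|\boldsymbol\lambda\|_{\bf Q}^2$ is bounded below by $\alpha_2\|{\bf u}\|_{\bf V}^2 + \alpha\|\boldsymbol\lambda\|_{\bf Q}^2$. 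Continuity of $\mathcal{B}$ with respect to the norm $(\|{\bf u}\|_{\bf V}^2 + \alpha\|\boldsymbol\lambda\|_{\bf Q}^2)^{1/2}$ follows from \eqref{lemma1:one}, from \eqref{lemma1:three} together with the continuous embedding $\boldsymbol Q\subset\boldsymbol\Lambda$, and from Cauchy--Schwarz for ${\bf c}$, while the right-hand side ${\bf v}\mapsto\int_\gamma \rho\,v_\gamma\,dx$ is controlled by $\|\rho\|_{0,\gamma}\|{\bf v}\|_{\bf V}$. Lax--Milgram then yields a unique $({\bf u},\boldsymbol\lambda)$, and testing the coupled system with $({\bf u},\boldsymbol\lambda)$ gives ${\bf a}({\bf u},{\bf u}) + \alpha\,{\bf c}(\boldsymbol\lambda,\boldsymbol\lambda) = \int_\gamma\rho\,u_\gamma\,dx \le \|\rho\|_{0,\gamma}\|{\bf u}\|_{\bf V}$, whence $\|{\bf u}\|_{\bf V}^2 + \alpha\|\boldsymbol\lambda\|_{\bf Q}^2 \le C\|\rho\|_{0,\gamma}^2$.

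For the $\alpha$-uniform estimate, since $\boldsymbol\lambda \in \boldsymbol Q \subset \boldsymbol\Lambda$, I would apply \eqref{lemma1:four} to obtain $\beta\|\boldsymbol\lambda\|_{\boldsymbol\Lambda} \le \sup_{{\bf v}\in{\bf V}} {\bf b}(\boldsymbol\lambda,{\bf v})/\|{\bf v}\|_{\bf V}$, and then replace ${\bf b}(\boldsymbol\lambda,{\bf v})$ by ${\bf a}({\bf u},{\bf v}) - \int_\gamma \rho\,v_\gamma\,dx$ using the first equation of \eqref{interf_VF_robin}. Bounding with the continuity \eqref{lemma1:one} of ${\bf a}$ and with the already established $\|{\bf u}\|_{\bf V}\le C\|\rho\|_{0,\gamma}$ gives $\|\boldsymbol\lambda\|_{\boldsymbol\Lambda} \le C\|\rho\|_{0,\gamma}$ with $C$ independent of $\alpha$. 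Summing this with the coercive bound of the previous paragraph produces the claimed estimate.

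The step I expect to require the most care is conceptual rather than computational: the Lax--Milgram bound is coercive only in the $\alpha$-weighted norm, so as $\alpha\to 0$ (that is, as the effective dielectric thickness $d\to 0$) it degenerates and gives no control of $\boldsymbol\lambda$. The essential point is that the inf-sup condition \eqref{lemma1:four}, established on the larger space $\boldsymbol\Lambda$ where ${\bf c}$ plays no role, transfers verbatim to $\boldsymbol Q$ and furnishes the missing $\alpha$-robust bound. The only verification needed is that no hidden $\alpha$-dependence creeps into the continuity constant of ${\bf a}$ or of the source functional, which is immediate since neither involves ${\bf c}$.
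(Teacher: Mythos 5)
Your proposal is correct and follows essentially the same route as the paper: the authors likewise obtain existence, uniqueness and the $\alpha$-weighted bound by coercivity of the global form on ${\bf V}\times\boldsymbol Q$ via Lax--Milgram, and then recover the $\alpha$-uniform control of $\boldsymbol\lambda$ in $\|\cdot\|_{\boldsymbol\Lambda}$ from the inf-sup condition \eqref{lemma1:four} applied to ${\bf b}(\boldsymbol\lambda,{\bf v})={\bf a}({\bf u},{\bf v})-\int_\gamma\rho\,v_\gamma\,dx$. Your write-up merely makes explicit the diagonal-testing cancellation and the continuity checks that the paper leaves implicit.
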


At the discrete level, we notice that $\boldsymbol \Lambda_h$ defined in Section \ref{discrete_form} is contained in ${\bf Q}$. Consequently, a discrete variational problem associated to \eqref{interf_VF_robin} is \\
{\bf Discrete variational formulation - Robin type condition:}

\noindent  Find $({\bf u}_h,\boldsymbol \lambda_h) \in {\bf V}_h\times \boldsymbol \Lambda_h$ s.t.
\begin{equation}\label{interf_discrete_VF_robin}
\left \{
\begin{array}{r c c cl @{\hspace{0.2cm}} l}
{\bf a}({\bf u}_h,{\bf v}_h) &-& {\bf b}(\boldsymbol \lambda_h,{\bf v}_h) &=& \int_{\gamma} \rho \, v^{\gamma}_{h_{\gamma}}\, dx,  &\forall {\bf v}_h \in {\bf V}_h,\\ 
{\bf b}(\boldsymbol \mu_h,{\bf u}_h) &+& \alpha \  {\bf c}(\boldsymbol \lambda_h,\boldsymbol  \mu_h) &=& 0, &\forall \boldsymbol\mu_h \in \boldsymbol \Lambda_h.
\end{array}
\right. 
\end{equation}
Using the coerciveness of ${\bf a}$ in ${\bf V}$ and ${\bf c}$ in $\boldsymbol Q$ and the continuity bounds of the three bilinear forms, we obtain after standard computations
$$
 \|{\bf u} - {\bf u}_h  \|_{\bf V}^2 + \alpha \| \boldsymbol \lambda - \boldsymbol \lambda_h \|^2_{\bf Q} \leq C ( \inf_{{\bf v}_h \in {\bf V}_h} \| {\bf u} - {\bf v}_h  \|_{\bf V}^2+ \alpha \inf_{\boldsymbol \mu_h  \in {\boldsymbol \Lambda}_h} \|\boldsymbol \lambda - \boldsymbol \mu_h \|_{\boldsymbol Q} ^2).
$$
Again, an error estimate independent on $\alpha$ can then be obtained using the discrete $\inf$-$\sup$ condition \eqref{discr_inf_sup}. Indeed, for any $\boldsymbol \mu_h \in \boldsymbol \Lambda_h$, we have
\begin{eqnarray*}
\beta \|  \boldsymbol\lambda_h -  \boldsymbol \mu_h\|_{ \boldsymbol \Lambda} \leq \sup_{{\bf v}_h\in {\bf V}_h} \frac{{\bf b}( \boldsymbol \lambda_h- \boldsymbol \mu_h,{\bf v}_h)}{\| {\bf v}_h\|_{\bf V}}
&\leq &  \sup_{{\bf v}_h\in {\bf V}_h} \frac{{\bf b}( \boldsymbol \lambda-  \boldsymbol \mu_h,{\bf v}_h)+{\bf a}({\bf u}_h-{\bf u},{\bf v}_h)} {\| {\bf v}_h\|_{\bf V}}\\
&\leq &  M \| \boldsymbol \lambda-  \boldsymbol \mu_h \|_{ \boldsymbol \Lambda} + \alpha_1 \|{\bf u}-{\bf u}_h \|_{\bf V},
\end{eqnarray*}
where $M$ and $\alpha_1$ are positive constants defined in Lemma \ref{lemma1}. With these estimates, we easily obtain the result summarized in the following theorem:
\begin{theorem}\label{Theorem_discrete_robin}
 For $\rho \in L^2(\gamma)$, there exists a unique solution $({\bf u}_h,\boldsymbol \lambda_h) \in {\bf V}_h \times \boldsymbol \Lambda_h$ to problem \eqref{interf_discrete_VF_robin} and the following error estimate holds
 \begin{eqnarray}\label{error_bound_robin}
|| {\bf u}-{\bf u}_h||^2_{\bf V} +  \| \boldsymbol \lambda - \boldsymbol \lambda_h \|^2_{\boldsymbol \Lambda}+ \alpha ||\boldsymbol\lambda - \boldsymbol\lambda_h||^2_{\boldsymbol Q}  &\nonumber \\
& \hspace{-5cm} \le  \displaystyle C \big( \inf_{{\bf v}_h \in {\bf V}_h} ||{\bf u}-{\bf v}_h||^2_{\bf V} +   \inf_{\boldsymbol \mu_h  \in {\boldsymbol \Lambda}_h} \|\boldsymbol \lambda - \boldsymbol \mu_h \|_{\boldsymbol \Lambda} ^2+ \alpha \inf_{\boldsymbol\mu_h \in \boldsymbol \Lambda_h} ||\boldsymbol\lambda - \boldsymbol\mu_h||^2_{\boldsymbol Q} \big) .
\end{eqnarray} 
 \end{theorem}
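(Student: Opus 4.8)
The plan is to assemble the two ingredients already displayed just before the statement, so the proof is essentially a matter of collecting them in the right order.

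First, for existence and uniqueness I would note that the combined bilinear form governing \eqref{interf_discrete_VF_robin}, namely
$$\mathcal{A}\big(({\bf u}_h,\boldsymbol\lambda_h),({\bf v}_h,\boldsymbol\mu_h)\big)={\bf a}({\bf u}_h,{\bf v}_h)-{\bf b}(\boldsymbol\lambda_h,{\bf v}_h)+{\bf b}(\boldsymbol\mu_h,{\bf u}_h)+\alpha\,{\bf c}(\boldsymbol\lambda_h,\boldsymbol\mu_h),$$
is coercive on ${\bf V}_h\times\boldsymbol\Lambda_h$ for the norm $\|{\bf u}_h\|_{\bf V}^2+\alpha\|\boldsymbol\lambda_h\|_{\bf Q}^2$. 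Indeed, testing against $({\bf u}_h,\boldsymbol\lambda_h)$ cancels the two ${\bf b}$-terms and leaves ${\bf a}({\bf u}_h,{\bf u}_h)+\alpha\,{\bf c}(\boldsymbol\lambda_h,\boldsymbol\lambda_h)$, which is bounded below by $\alpha_2\|{\bf u}_h\|_{\bf V}^2+\alpha\|\boldsymbol\lambda_h\|_{\bf Q}^2$ using \eqref{coerc_h} and the identity ${\bf c}(\boldsymbol\lambda_h,\boldsymbol\lambda_h)=\|\boldsymbol\lambda_h\|_{\bf Q}^2$ (valid since $\boldsymbol\Lambda_h\subset{\bf Q}$). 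On the finite-dimensional discrete space, coercivity of this (non-symmetric) form yields existence and uniqueness by Lax--Milgram, exactly as for the continuous problem \eqref{interf_VF_robin}.

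Second, for the error bound I would proceed in two steps. The $\alpha$-weighted part,
$$\|{\bf u}-{\bf u}_h\|_{\bf V}^2+\alpha\|\boldsymbol\lambda-\boldsymbol\lambda_h\|_{\bf Q}^2\le C\Big(\inf_{{\bf v}_h\in{\bf V}_h}\|{\bf u}-{\bf v}_h\|_{\bf V}^2+\alpha\inf_{\boldsymbol\mu_h\in\boldsymbol\Lambda_h}\|\boldsymbol\lambda-\boldsymbol\mu_h\|_{\bf Q}^2\Big),$$
is the C\'ea-type estimate recorded in the text right after \eqref{interf_discrete_VF_robin}; I would derive it from Galerkin orthogonality together with the coercivity of ${\bf a}$ and ${\bf c}$ and the continuity bounds of ${\bf a}$, ${\bf b}$, ${\bf c}$, being careful to carry the $\alpha$ weights so that $C$ remains independent of $\alpha$. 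Third, to upgrade to $\alpha$-independent control of $\boldsymbol\lambda$ in the $\boldsymbol\Lambda$ norm, I would write $\|\boldsymbol\lambda-\boldsymbol\lambda_h\|_{\boldsymbol\Lambda}\le\|\boldsymbol\lambda-\boldsymbol\mu_h\|_{\boldsymbol\Lambda}+\|\boldsymbol\mu_h-\boldsymbol\lambda_h\|_{\boldsymbol\Lambda}$ for arbitrary $\boldsymbol\mu_h\in\boldsymbol\Lambda_h$, and bound the second term by the discrete inf-sup computation displayed just before the theorem, giving $\beta\|\boldsymbol\mu_h-\boldsymbol\lambda_h\|_{\boldsymbol\Lambda}\le M\|\boldsymbol\lambda-\boldsymbol\mu_h\|_{\boldsymbol\Lambda}+\alpha_1\|{\bf u}-{\bf u}_h\|_{\bf V}$. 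Squaring, using the $\alpha$-weighted estimate to absorb $\|{\bf u}-{\bf u}_h\|_{\bf V}$, and taking the infimum over $\boldsymbol\mu_h$ then produces the three-term bound \eqref{error_bound_robin}.

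I do not expect a genuine obstacle here, since every ingredient is already in place; the only point demanding care is the uniformity of the constant $C$ in $\alpha$, hence in the dielectric thickness $d$. Measuring $\boldsymbol\lambda-\boldsymbol\lambda_h$ only through the coercivity of ${\bf c}$ would force a factor $\alpha^{-1}$, so the essential device is to control the discrepancy in the larger-space norm $\|\cdot\|_{\boldsymbol\Lambda}$ by means of the $h$- and $\alpha$-independent discrete inf-sup condition \eqref{discr_inf_sup}, in complete analogy with the continuous argument carried out after \eqref{interf_VF_robin} using \eqref{lemma1:four}.
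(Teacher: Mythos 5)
Your proposal is correct and takes essentially the same route as the paper: coercivity of the combined form (hence Lax--Milgram) for existence and uniqueness, the $\alpha$-weighted C\'ea-type estimate from Galerkin orthogonality and the coercivity/continuity of ${\bf a}$, ${\bf b}$, ${\bf c}$, and then the upgrade to $\alpha$-independent control of $\boldsymbol\lambda - \boldsymbol\lambda_h$ in $\|\cdot\|_{\boldsymbol\Lambda}$ via the discrete inf-sup condition \eqref{discr_inf_sup}, a triangle inequality, and an infimum over $\boldsymbol\mu_h$. This is precisely the argument the paper itself records in the text immediately preceding the theorem, assembled in the same order.
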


\noindent As for the case of the continuity condition \eqref{eq_continuity} discussed in the previous section, we can write the algebraic form of \eqref{interf_discrete_VF_robin},  emphasizing the interface structure of the formulation. We introduce first the bilinear form
\begin{equation*}
c_i(\lambda^i_h,\mu^i_h)= \int_{\gamma} \lambda^i_h \mu_{h}^i  \, dx,
\end{equation*}
as well as the associated matrix $\mathbb{C}_i$, with $i=1,2$. Then, problem \eqref{interf_discrete_VF_robin} can be written in matrix form as follows
\begin{equation}
 \begin{bmatrix}
  \mathbb{ A}_1 & 0 & -\mathbb{B}_1^T & 0 & 0\\
 0 & \mathbb{ A}_2 & 0 & -\mathbb{B}_2^T& 0 \\
   \mathbb{B}_1 & 0 & \alpha \mathbb{C}_1& 0 & -\mathbb{B}^1_{\gamma}\\
  0 & 0 & \mathbb{B}_2 &   \alpha  \mathbb{C}_2  &- \mathbb{B}^2_{\gamma} \\
 0& 0 & ( \mathbb{B}^1_{\gamma})^T  & (\mathbb{B}^2_{\gamma})^T & \mathbb{A}_{\gamma}
 \end{bmatrix}
\begin{bmatrix}
\underline{u}_1 \\
\underline{u}_2 \\
\underline{\lambda}_1  \\
\underline{\lambda}_2\\
\underline{u}_{\gamma}
 \end{bmatrix}
 =
 \begin{bmatrix}
  0 \\
  0\\
  0  \\
  0\\
  \underline{r}
 \end{bmatrix}.
\end{equation}

\noindent Doing the same eliminations than the ones leading to \eqref{shur} and using the fact that $ \mathbb{C}_i$'s are positive definite, we obtain the following linear system acting only on the unknown vector $\underline{u}_{\gamma}$ 
\begin{equation}\label{shur_robin}
 \Big ( \mathbb{A}_{\gamma} + \sum_{i=1}^2 (\mathbb{B}^i_{\gamma})^T (\mathbb{B}_i \mathbb{ A}_i^{-1} \mathbb{B}_i^T +\alpha  \mathbb{C}_i )^{-1}  \mathbb{B}^i_{\gamma} \Big ) \, \underline{u}_{\gamma} \, = \, \underline{r} .
\end{equation}
\medskip\noindent
Again, we can reinterpret \eqref{shur_robin} in terms of a bilinear form acting on $V^{\gamma}_{h_{\gamma}} \times V^{\gamma}_{h_{\gamma}}$. More precisely, $u^{\gamma}_{h_{\gamma}}$ is still solution of \eqref{interface} depending on $\lambda^i_h(u^{\gamma}_{h_{\gamma}})$ that is now the second component of the solution to the following 2D problem:\\
Find $u^{\gamma}_{h_{\gamma}} \in V^{\gamma}_{h_{\gamma}}$ s.t.
\begin{equation}\label{bulk_robin}
\left \{
\begin{array}{r c l @{\hspace{0.3cm}} l}
a_i (u_h^i(u^{\gamma}_{h_{\gamma}}),v_h^i) -   b_i(\lambda^i_h(u^{\gamma}_{h_{\gamma}}),v^i_h) &=& 0  &\qquad \forall v^i_h \in V^i_h, \\
 b_i(\mu^i_h,u_h^i(u^{\gamma}_{h_{\gamma}}))+\alpha c_i(\lambda^i_h(u^{\gamma}_{h_{\gamma}}),\mu^i_h) &=& b^i_{\gamma}(\mu^i_h,u^{\gamma}_{h_{\gamma}})  
&\qquad \forall \mu_h^i \in \Lambda_h^i .
\end{array}
\right.
\end{equation}


\section{Numerical experiments} \label{section_num}

To illustrate the approach, we present some numerical tests for a Graphene Field-Effect Transistor (GFET). Self-consistent simulations for such a structure are performed for instance in \cite{NBCD10,NR21}. Here, we consider a longitudinal length $L=60$ nm and a transversal length $l=4$ nm. In the $y$ direction, it contains a single layer of graphene, characterized by an effective dielectric thickness $d=0.2$ nm and a graphene permittivity constant $\epsilon_{ch}=13.9 \ \epsilon_0$ sandwiched between two layers of dielectric $SiO_2$ ($\epsilon_{ox}=3.9 \ \epsilon_0$) \cite{FVF16}, $\epsilon_0$ being the permittivity in vacuum.  Only in Subsection \ref{section_num_anisotropy}, where the condition \eqref{eq_continuity} is replaced by the Robin condition \eqref{eq_continuity2} (as analyzed in Section \ref{section_robin}), the permittivity constant $\epsilon_{ch}$ is replaced by a diagonal tensor with an in-plane permittivity $\epsilon_{//}=13.9 \ \epsilon_0$ and an out-of-plane permittivity $\epsilon_{\perp}=6.9 \ \epsilon_0$. As proposed in \cite{NBCD10} and schematically represented in Fig.\ref{Fig_device}, the transport direction $x$ is composed of a $20$ nm active zone, with a doping concentration $N^-_{dop}=10^{14}$ m$^{-2}$, sandwiched between a $20$ nm Source region and a $20$ nm Drain region, both highly doped  ($N^+_{dop}=10^{17}$ m$^{-2}$). Source and Drain potentials are imposed on the entire vertical edges $\{ 0\} \times (-\frac{l}{2},\frac{l}{2})$ and $\{L\} \times(-\frac{l}{2},\frac{l}{2})$. Most of the results presented here correspond to thermal equilibrium (zero applied Drain-Source voltage $V_{DS}=V_D-V_S=0.0$ V). For out-of-equilibrium results, we use an iterative process, starting with $V_{DS}=0$ V and then incrementing  $V_{DS}$ with an increment step of $0.01$ V. Finally, a Gate potential $V_G$ is imposed on $\{ \pm \frac{l}{2}\} \times ] x_G,L-x_G[$, with $x_G=10$ nm, to modulate the particle transport. Since the effect of changing the gate voltage is as expected for a double gate device and it does not infer on the interface approach, we only consider here the case $V_G=0$ V.

\begin{figure}[h!]
\begin{center}
\includegraphics[scale=0.6, keepaspectratio=true]{./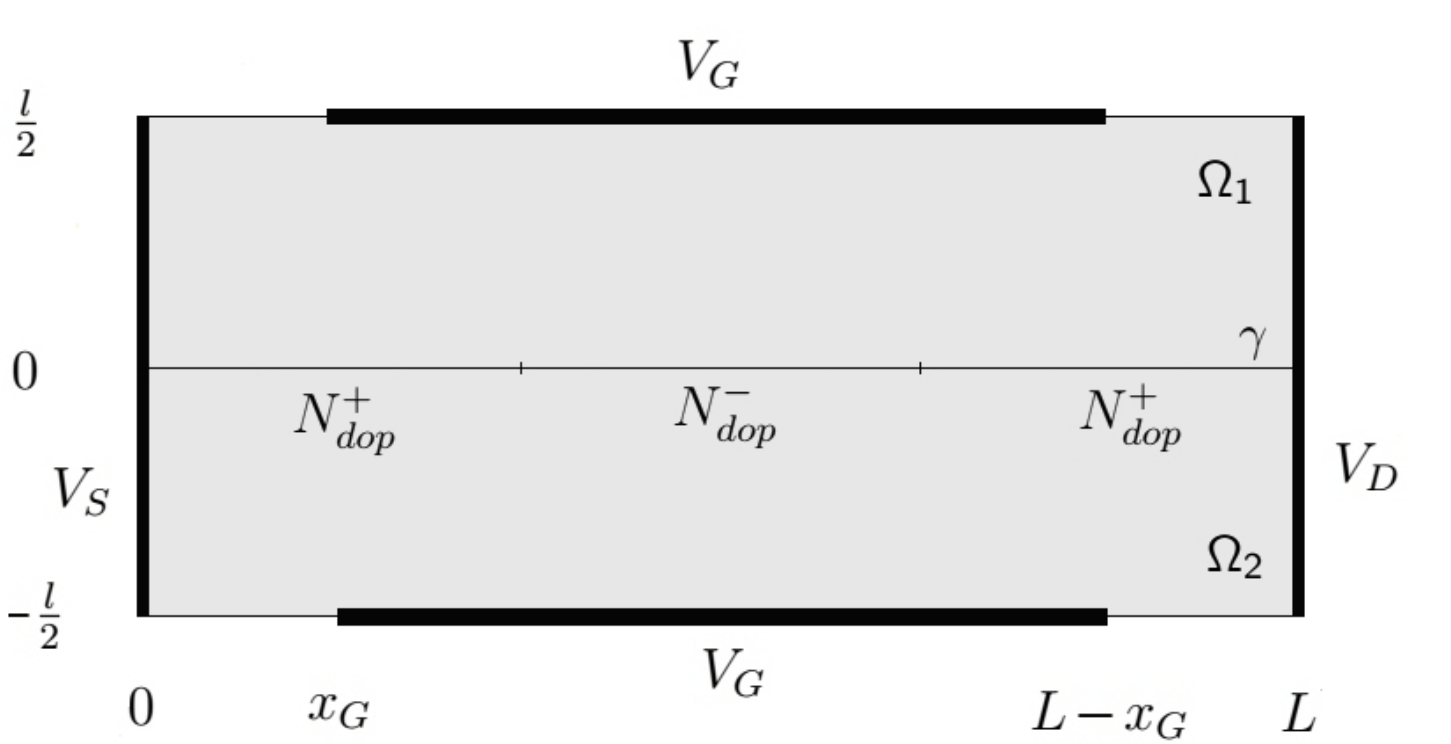}
\caption{Schematic representation of the GFET.}\label{Fig_device}
\end{center}
\end{figure}

In the experiments presented below, we use a regular uniform discretization. More precisely, the triangulation $\{ \mathcal{T}_{h_i}(\Omega_i)\}_{h_{i}}$ is obtained defining a cartesian grid with $N_x$ and $N_y$ edges respectively in the $x$ and in the $y$ direction. We define $h_i$ as the scaled triangle diameter $\sqrt{\frac{1}{N_x^2}+\frac{l^2}{L^2}\frac{1}{N^2_y}}$. Moreover, the decomposition $\{ \mathcal{E}_{h_\gamma}(\gamma)\}_{h_{\gamma}}$ is a uniform decomposition of $\gamma$ that use $N_{\gamma}$ intervals and we define $h_{\gamma}$ as $\frac{1}{N_{\gamma}}$.

\subsection{Drift-Diffusion Poisson coupling}

In this paper, we consider that the surface particle density $\rho$ is solution of  the classical stationary drift-diffusion equation
\begin{equation}\label{eq_DD}
J'(x)=0, \hspace{1cm}\hbox{with}  \hspace{1cm}  J(x) = q \mu \Big(U_T\rho'(x) - \rho(x)  u'_{\gamma}(x) \Big),
\end{equation}
completed with the neutrality boundary conditions
\begin{equation}\label{BC_DD}
\rho(0)=\rho(L)=N^+_{dop}.
\end{equation}
In the expression of the electron current density $J$, $q$ is the elementary charge, $k_B$ the Boltzmann constant, $T$ the temperature taken equal to $77$ K, $U_T=\frac{k_BT}{q}$ the thermal potential and $\mu$ the (constant) electron mobility that we choose equal to $4.5\times 10^{3}$ cm$^2$.V$^{-1}$.s$^{-1}$ as proposed in \cite{DBP10}. 

Different transport models, that have been recently derived or investigated, can be used to perform accurate self-consistent simulations of a GFET. For instance, a bipolar drift-diffusion model with peculiar mobility functions deduced from semiclassical Boltzmann equations have been considered in \cite{NR21}. Quantum effects can be added to such a drift-diffusion model or to a hydrodynamical models (see e.g. \cite{ZB11,ZJ13,LR19}). At a microscopic level, a full quantum description can be done using a Dirac-like equation that describes the chiral character of massless fermions in graphene \cite{CGPNG,NBCD10}. A phase-space formulation can also be considered thanks to the Wigner formalism as proposed for instance in \cite{MS11}. Finally, we mention that different description levels can be spatially coupled deriving quantum interface conditions as done in \cite{BN18,BNNR21} in the case of graphene. In this work, since our aim is to focus on the numerical resolution of the Poisson equation and to present the efficiency of the proposed interface approach, we have chosen not to enrich the transport description and to perform (non realistic) self-consistent computations using \eqref{eq_DD}.

Notice that, at thermal equilibrium, the solution of \eqref{eq_DD}-\eqref{BC_DD} is explicitly expressed with respect to $u_{\gamma}$ by
\begin{equation}\label{expr_dens_eq}
\rho(x) = N^+_{dop} e^{\frac{u_{\gamma}(x)}{U_T}}.
\end{equation}
Out-of-equilibrium, we use the decomposition $\{ \mathcal{E}_{h_\gamma}(\gamma)\}_{h_{\gamma}}$ (same decomposition than the one used for $u^{\gamma}_{h_{\gamma}}$) to discretize equation \eqref{eq_DD} by means of a Scharfetter-Gummel scheme (see e.g. \cite{BMP87} for details).

To treat the nonlinearity of the Drift-Diffusion Poisson coupling, we use the linearized Gummel iterative process as in \cite{Gu64}. In this interface context, it amounts to solve, for a given $\rho^k$, the modified Poisson equation 
\begin{equation}\label{poisson_gummel}
\left \{
\begin{array}{r c l }
{\bf a}({\bf u}^{k+1},{\bf v}) - {\bf b}(\boldsymbol \lambda^{k+1},{\bf v}) + \frac{1}{U_T}\int_{\gamma} \rho^{k} u_{\gamma}^{k+1} v_{\gamma} \ dx  &=& \int_{\gamma} \rho^k v_{\gamma} \ dx + \frac{1}{U_T} \int_{\gamma} \rho^{k} u_{\gamma}^{k} v_{\gamma} \ dx\\
{\bf b}(\boldsymbol \mu^{k+1},{\bf u}^{k+1}) &=& 0
\end{array}
\right. ,
\end{equation}
iteratively followed by the resolution of the transport equation \eqref{eq_DD}. We emphasize that, in \eqref{poisson_gummel}, only the interface variable $u_{\gamma}$ appears in the additional terms (compared to \eqref{interf_VF}). Consequently,  the costly assembling of the matrix can be done once at the beginning of the code and, at each Gummel iteration $k$, only few entries have to be updated for taking care of the non-linearity,  as illustrated in Fig.\ref{Fig_spy}. The assembling cost is then comparable to the one for a linear problem, contributing to the computational efficiency of our interface approach.

\begin{figure}[h!]
\begin{center}
\includegraphics[scale=0.18, keepaspectratio=true]{./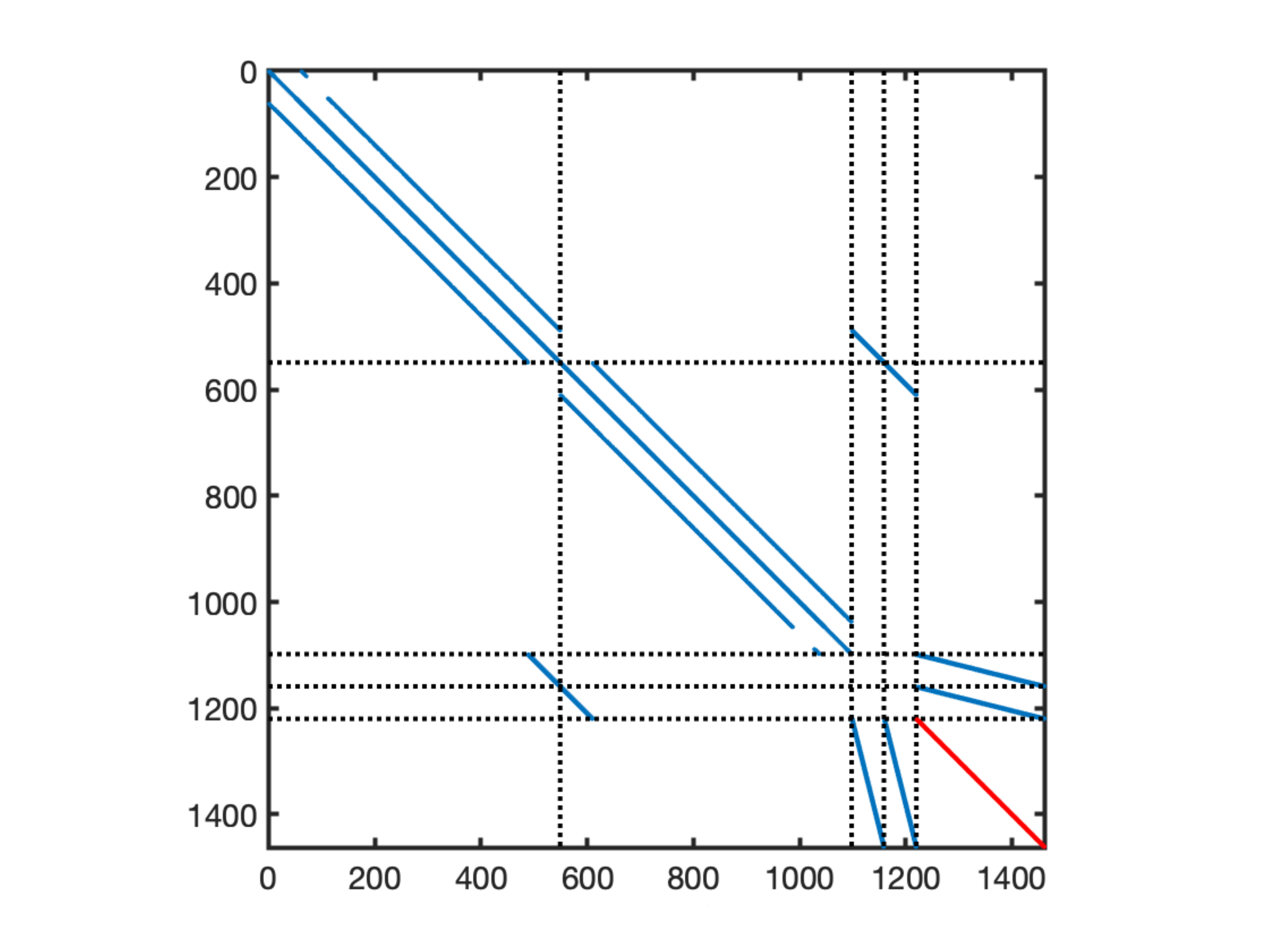}
\end{center}
\caption{Non zeros entries for the case $N_x=60$, $N_y=16$ and $N_{\gamma}=240$ (variable ordering: ($u^1_h,u^2_h,\lambda^1_h$, $\lambda^2_h$, $u^{\gamma}_{h_{\gamma}}$)). Entries affected by the Gummel iterative process are indicated in red.}
\label{Fig_spy}
\end{figure}

First, we present the potential and the density profiles obtained for our test case. Fig.\ref{Fig_pot_eq} represents the self-consistent potential at thermal equilibrium, showing that its shape is clearly driven by the chosen doping profile. Different applied voltages are then considered in Fig.\ref{Fig_pot_dens_vds} revealing the particle transport from Source to Drain.

\begin{figure}[h!]
\begin{center}
\includegraphics[scale=0.17, keepaspectratio=true]{./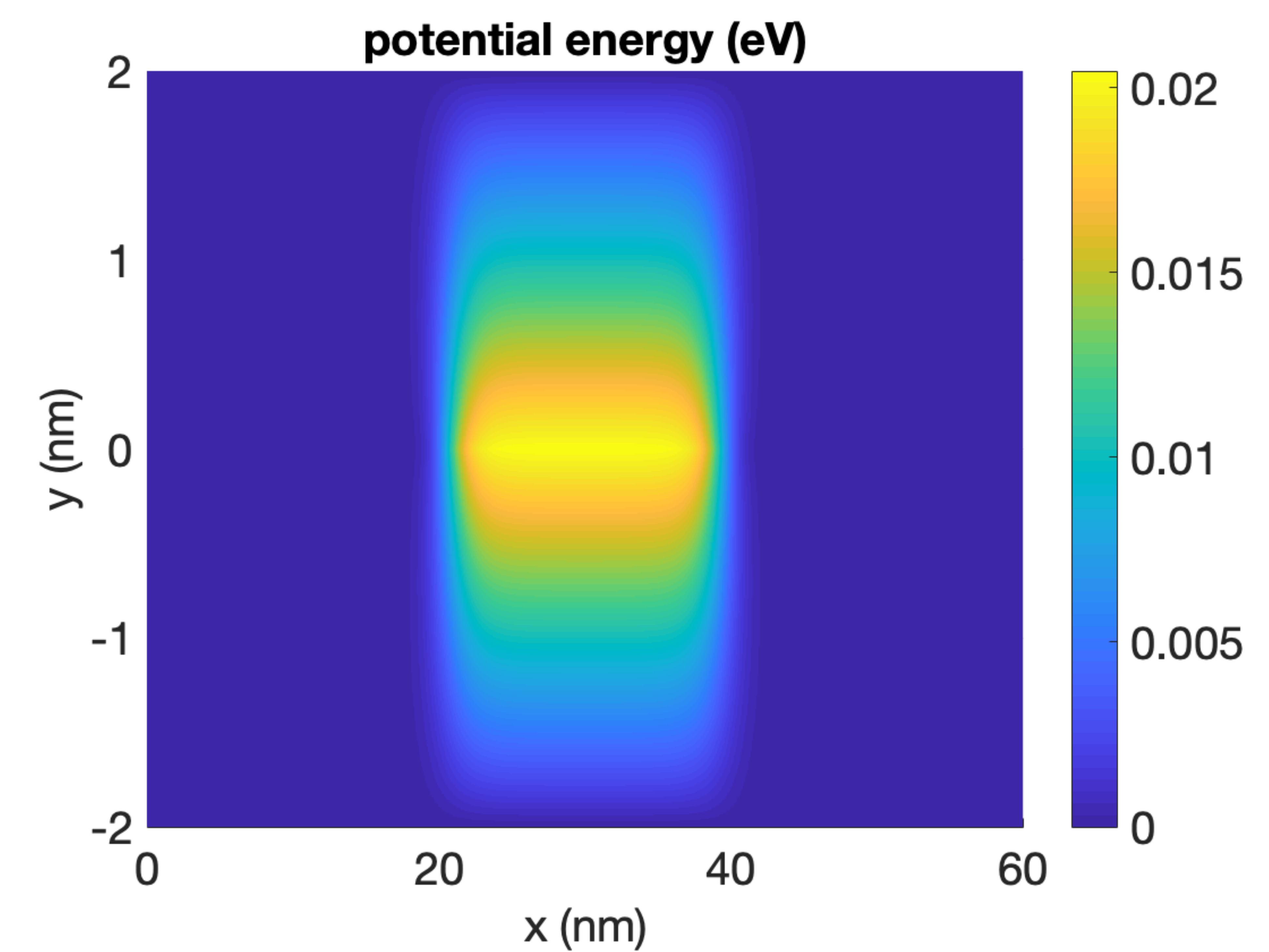}\hspace{0.6cm}
\includegraphics[scale=0.17, keepaspectratio=true]{./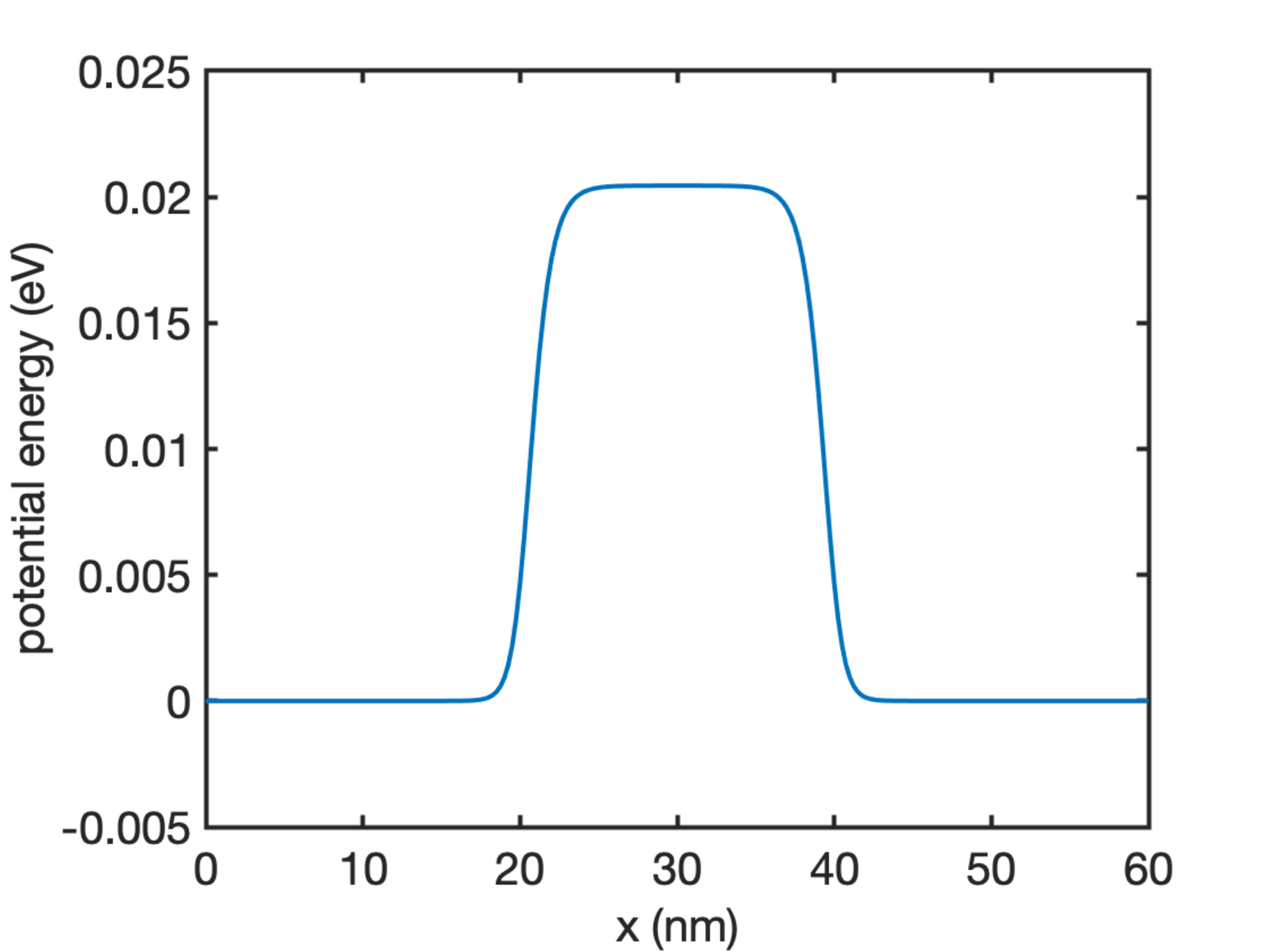}
\end{center}
\caption{2D potential energy $-u(x,y)$ (left) and interface potential energy $-u_{\gamma}(x)$ (right) at thermal equilibrium.}
\label{Fig_pot_eq}
\end{figure}

\begin{figure}[h!]
\begin{center}
\includegraphics[scale=0.17, keepaspectratio=true]{./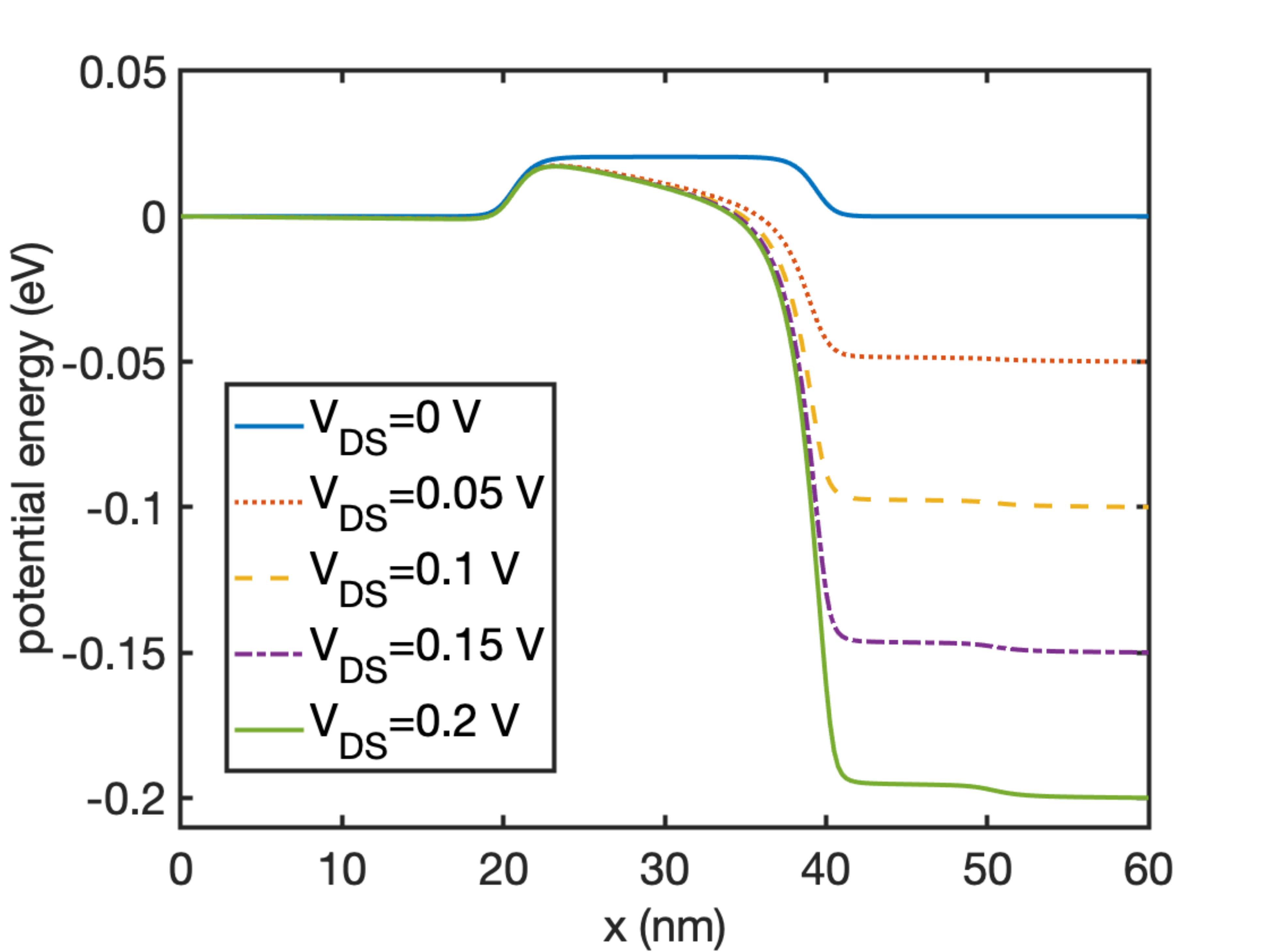}\hspace{0.6cm}
\includegraphics[scale=0.17, keepaspectratio=true]{./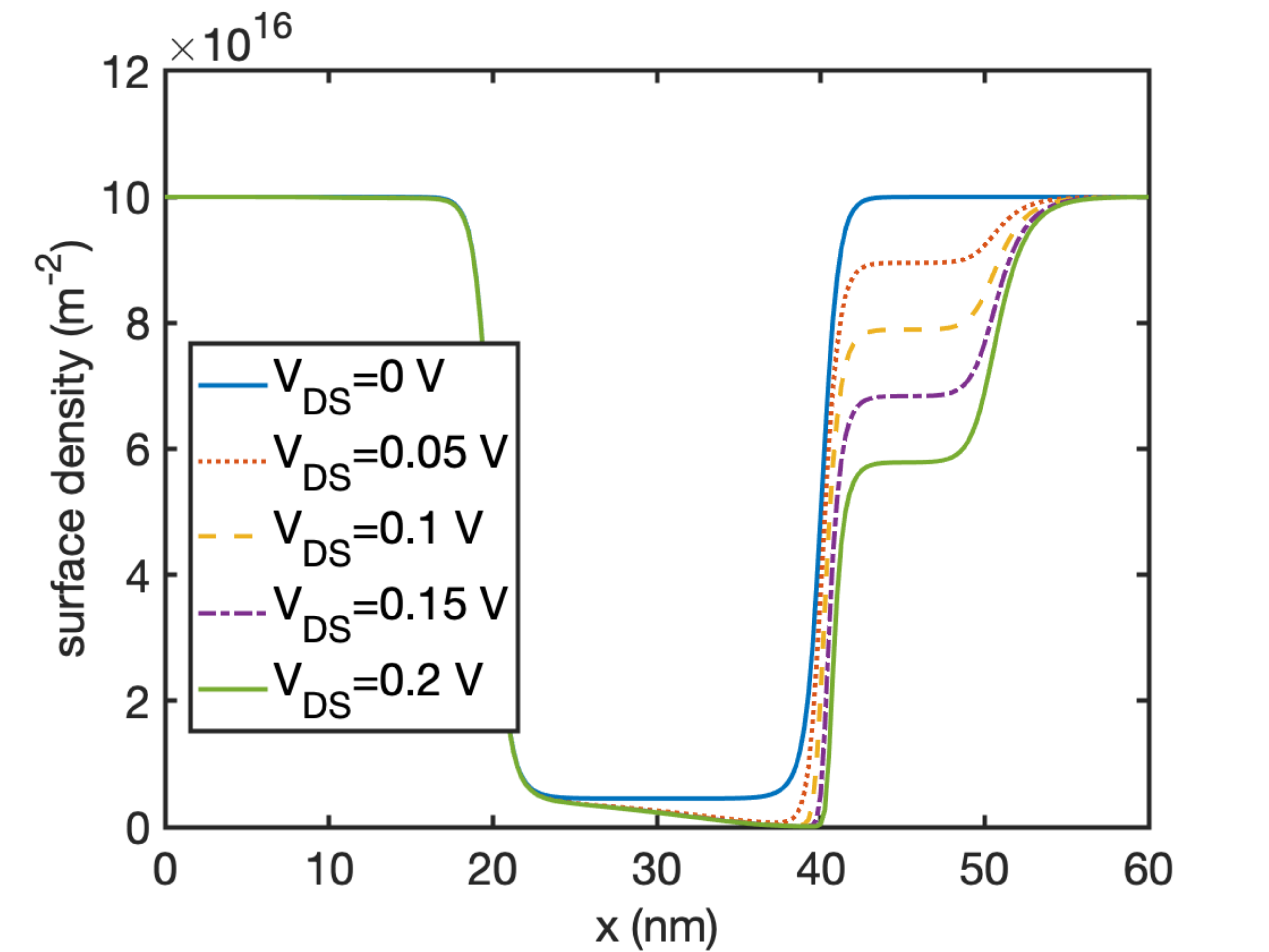}
\end{center}
\caption{Interface potential energy $-u_{\gamma}(x)$ (left) and surface density $\rho(x)$ (right) for different applied voltages.}
\label{Fig_pot_dens_vds} 
\end{figure}

\subsection{Convergence history}

{We study the numerical convergence of our approach. At thermal equilibrium, the density depends non linearly on the potential as expressed in \eqref{expr_dens_eq}. It fulfills  the properties of boundedness and monotonicity in the sense that for $u_{\gamma}, v_{\gamma} \in L^{\infty}(\gamma)$ there exist $\kappa_1$, $\kappa_2 >0$ such that
  \begin{equation} \label{nonlin:h1}
  \| \rho(u_{\gamma}) - \rho(v_{\gamma}) \|_{0,\gamma} \leq \kappa_1 \| u_{\gamma} - v_{\gamma} \|_{{0,\gamma}},
\end{equation}
\begin{equation} \label{nonlin:h2}
  \int_{\gamma} (\rho(u_{\gamma}) - \rho(v_{\gamma}) )(u_{\gamma}- v_{\gamma}) \,dx \geq \kappa_2 \| u_{\gamma} - v_{\gamma} \|_{{0,\gamma}}^2.
\end{equation}
Thus, results obtained in Theorem \ref{Theorem_discrete} (and therefore in Theorem \ref{Th_final_err_est}) can be easily extended to this case. In addition, we are also interested in studying numerically the behavior for non-zero applied voltages.\\

\begin{figure}[b!]
\begin{center}
\includegraphics[scale=0.2, keepaspectratio=true]{./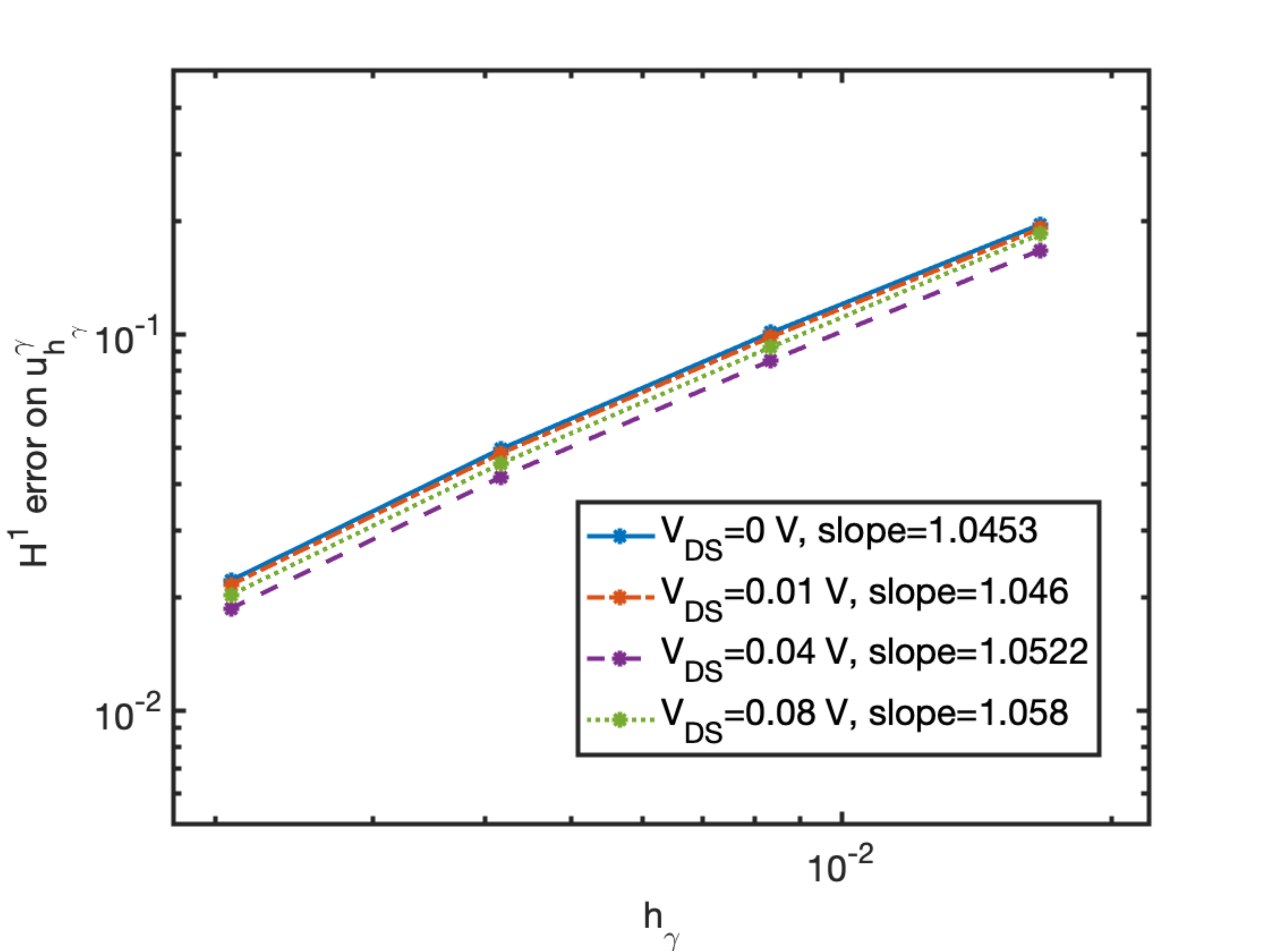}
\includegraphics[scale=0.2, keepaspectratio=true]{./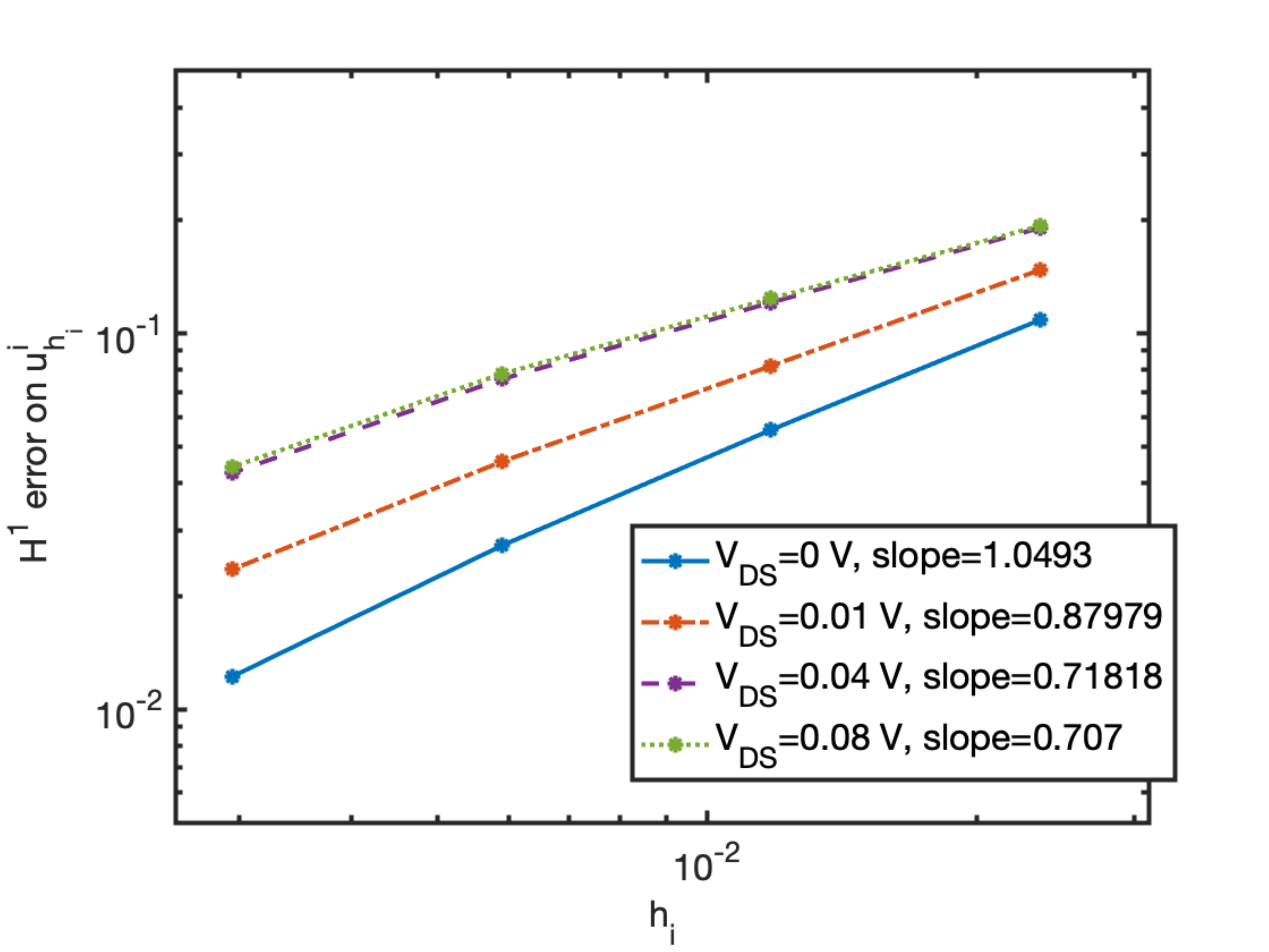}
\end{center}
\caption{Relative $H^1$ errors with respect to $h$ in logarithmic scale for the interface component (left) and the oxide ones (right).}
\label{Fig_err_est}
\end{figure}

We compute the solution for different meshes defined by $N_x=N_{\gamma}=60\times 2^i$ and $N_y=2^{i+2}$, $i=0,\dots,4$ and we choose the one obtained for $i=4$ ($N_x=N_{\gamma}=960$ and $N_y=64$) as reference solution. Relative errors corresponding to
\begin{equation}
\mathcal{E}_{1D} = \frac{\| u^{\gamma}_{h_{\gamma,ref}} - u^{\gamma}_{h_{\gamma}}\|_{1,\gamma} }{\| u^{\gamma}_{h_{\gamma,ref}}\|_{1,\gamma} } \hspace{1cm}\hbox{and} \hspace{1cm} \mathcal{E}_{2D} =\frac{\Big(\sum_{i=1}^2 \| u^i_{h_{i,ref}} - u^{i}_{h_i} \|_{1,\Omega_i}^2\Big)^{1/2}}{\Big(\sum_{i=1}^2 \| u^i_{h_{i,ref}} \|_{1,\Omega_i}^2\Big)^{1/2}},
\end{equation} 
are presented in Fig.\ref{Fig_err_est}, looking respectively at the interface component $u^{\gamma}_{h_{\gamma}}$ (left) and at the oxide components $u^i_{h_i}$ (right). As expected, straight lines of slope 1 are obtained for thermal equilibrium, in both cases. For non-zero applied voltages, we observe however a slightly lower slope for the oxide components. This behavior can be explained by a deterioration of the regularity of the solution at the junctions between the Neumann and the Dirichlet boundary conditions (and in particular at gate extremities). When considering the $L^{\infty}$ errors presented in Fig.\ref{Fig_err_est_Linf} the loss in the convergence rate for  non-zero applied voltages is more  evident. Indeed,  the maximum error is located along $y=0$ at the end of the channel for thermal equilibrium and at $y=\pm\frac{l}{2}$ around the gate extremity for non-zero applied voltages.

\begin{figure}[t!]
\begin{center}
\includegraphics[scale=0.2, keepaspectratio=true]{./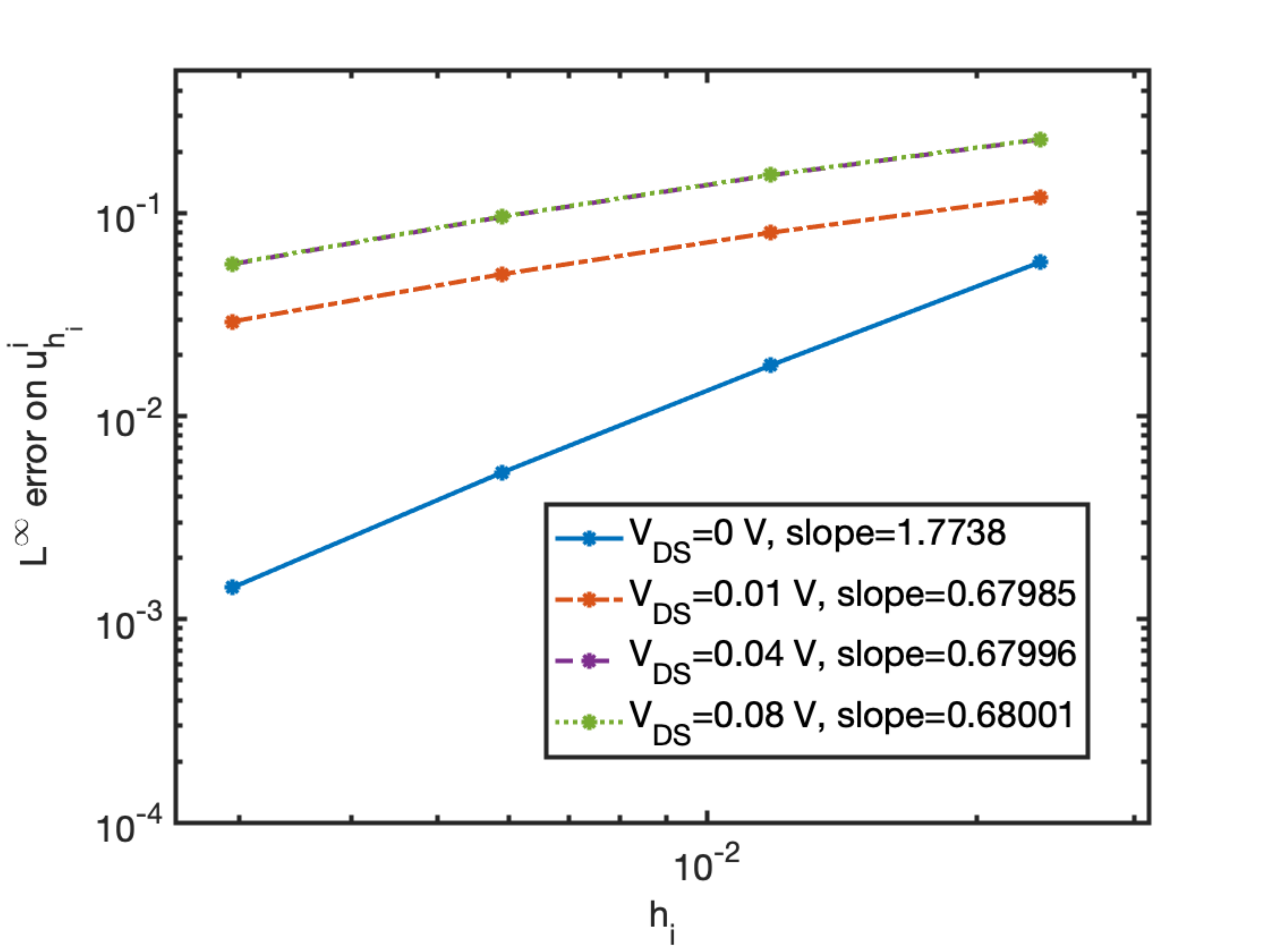}
\end{center}
\caption{Relative $L^{\infty}$ errors with respect to $h$ in logarithmic scale for the oxide components.}
\label{Fig_err_est_Linf}
\end{figure}

In self-consistent computations it is also interesting to look at the error behavior for the density, as done in Fig.\ref{Fig_err_est_dens}. The rate of convergence for the $H^1$ error is shown to be 1, both at thermal equilibrium (i.e. when $\rho$ is explicitly expressed with respect to $u_{\gamma}$ by  \eqref{expr_dens_eq})  and  with different applied voltages.\\

\begin{figure}[h!]
\begin{center}
\includegraphics[scale=0.2, keepaspectratio=true]{./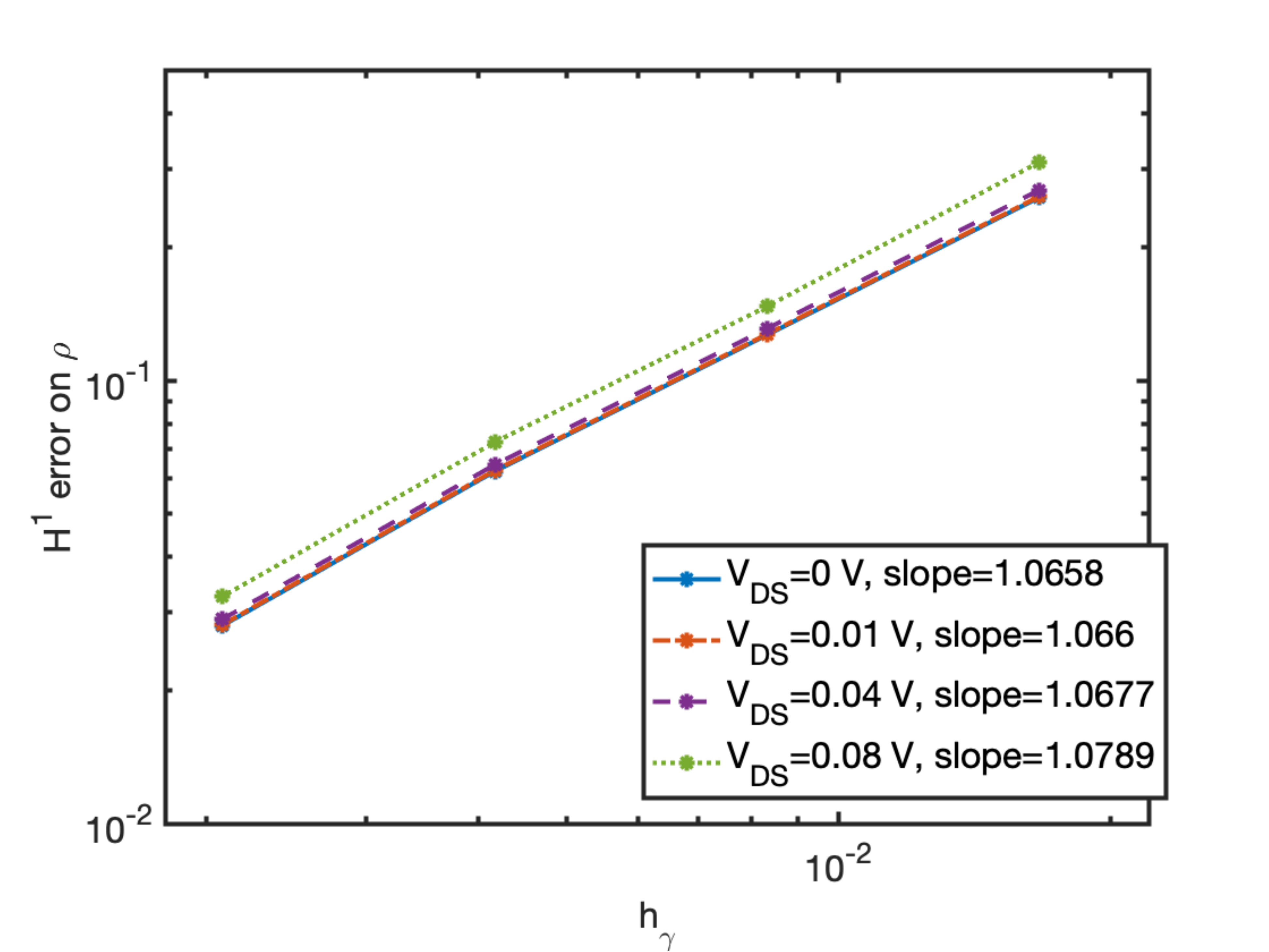}
\end{center}
\caption{Relative $H^1$ errors with respect to $h$ in logarithmic scale for the surface density $\rho$. }
\label{Fig_err_est_dens}
\end{figure}

\subsection{Number of discretization points}

Next, we discuss the effect of the number of discretization points $N_y$, $N_x$ and $N_{\gamma}$ on the error.  We present results at thermal equilibrium and for $V_{DS}=0.04$ V. Similar results are obtained for other non-zero applied voltages. The reference solution is chosen as in the previous subsection.

First, we fix $N_x=N_{\gamma}$ and we look at the errors varying $N_y$.  Results are presented in Table \ref{Table_Ny} for $N_x=N_{\gamma}=60$. The process bringing to the interface model reduces the importance of the transversal discretization around the single layer material and, indeed, we observe that the choice of $N_y$ slightly affects the result. It is especially true when we do not consider the coarser mesh ($N_y=8$). That is why, in the following, we fix $N_y=16$ and focus on the discretization along the transport direction. \\

\begin{table}[h!]
\begin{center}
\begin{tabular} {|c||c||c||c|c|}
\cline{2-5}
 \multicolumn{1}{c}{} 
& \multicolumn{2}{|c||}{$V_{DS}=0\ V$} & \multicolumn{2}{|c|}{$V_{DS}=0.04\ V$} \\
\hline
$N_y$ & $\mathcal{E}_{1D}$ & $\mathcal{E}_{2D}$ & $\mathcal{E}_{1D}$ & $\mathcal{E}_{2D}$  \\
\hline \hline
8 &  1.9557e-01   & 8.7731e-02 & 1.6695e-01 & 1.6448e-01 \\
\hline
16 & 1.9567e-01   & 8.1467e-02 & 1.6707e-01  & 1.5721e-01 \\ 
\hline
32 &  1.9570e-01  &  7.8688e-02 & 1.6711e-01 & 1.5496e-01 \\
\hline
64  & 1.9571e-01  & 7.7546e-02 & 1.6713e-01 & 1.5387e-01\\
\hline
\end{tabular}
\end{center}
\caption{Relative $H^1$ errors varying $N_y$ for $N_x=N_{\gamma}=60$.}
\label{Table_Ny}
\end{table}

As we mentioned, an interesting point of this approach is that the interface grid does not need to match with the one of the oxide subdomains. In other words, $N_{\gamma}$ can be chosen different from $N_x$. In Tables \ref{Table_Nx=Ng} and \ref{Table_Nx_neq_Ng}, we present the $H^1$ errors for the interface component $u^{\gamma}_{h_{\gamma}}$ as well as the density $\rho$ for a fixed $N_y=16$ either taking $N_x=N_{\gamma}$ (Table \ref{Table_Nx=Ng}) or fixing $N_x$ and varying $N_{\gamma}$ (Table \ref{Table_Nx_neq_Ng}).

\begin{table}[h!]
\begin{center}
\begin{tabular} {|c||c||c||c||c|}
\cline{2-5}
 \multicolumn{1}{c}{} 
& \multicolumn{2}{|c||}{$V_{DS}=0\ V$} & \multicolumn{2}{|c|}{$V_{DS}=0.04\ V$} \\
\hline
$N_x=N_{\gamma}$  &  $\mathcal{E}_{1D}$ & $\frac{\| \rho - \rho_{ref}\|_{1,\gamma} }{\| \rho_{ref}\|_{1,\gamma} }$ &  $\mathcal{E}_{1D}$ & $\frac{\| \rho - \rho_{ref}\|_{1,\gamma} }{\| \rho_{ref}\|_{1,\gamma} }$ \\
\hline \hline
60 &   1.9567e-01    & 2.5930e-01 & 1.6707e-01 &  2.6920e-01 \\ \hline
120 &  1.0079e-01  & 1.2674e-01 &  8.4934e-02 & 1.3105e-01  \\ \hline
240 &   4.9450e-02  &  6.2400e-02 & 4.1631e-02  & 6.4557e-02 \\  \hline
480 &  2.2167e-02  &   2.7980e-02 & 1.8681e-02 & 2.8969e-02 \\ \hline
\end{tabular}
\end{center}
\caption{Relative $H^1$ errors of the interface potential $u^{\gamma}_{h_{\gamma}}$ and the density $\rho$ in the case $N_x=N_{\gamma}$.}
\label{Table_Nx=Ng}
\end{table}

\begin{table}[h!]
\begin{center}
\begin{tabular} {|c|c||c||c||c||c|}
\cline{3-6}
 \multicolumn{2}{c}{}  & \multicolumn{2}{|c||}{$V_{DS}=0\ V$} & \multicolumn{2}{|c|}{$V_{DS}=0.04\ V$} \\\hline
$N_x$ & $N_{\gamma}$ & $\mathcal{E}_{1D}$  & $\frac{\| \rho - \rho_{ref}\|_{1,\gamma} }{\| \rho_{ref}\|_{1,\gamma} }$ & $\mathcal{E}_{1D}$  & $\frac{\| \rho - \rho_{ref}\|_{1,\gamma} }{\| \rho_{ref}\|_{1,\gamma} }$ \\
\hline \hline
& 30 &  3.3376e-01   &  5.4465e-01  & 2.7736e-01 & 5.5921e-01  \\ \cline{2-6}
& 120 &  1.0177e-01   &  1.2709e-01 & 8.7112e-02  & 1.3215e-01   \\ \cline{2-6}
60 &  240 &  5.3930e-02  & 6.4590e-02 & 4.8621e-02 &  6.9023e-02  \\ \cline{2-6}
& 480 &  3.2494e-02   & 3.3796e-02  & 3.2738e-02 & 3.9536e-02    \\ \cline{2-6}
& 960 &  2.4381e-02   & 1.9597e-02 & 2.7391e-02 & 2.7634e-02 \\
\hline
\end{tabular}

\end{center}
\caption{Relative $H^1$ errors of the interface potential $u^{\gamma}_{h_{\gamma}}$ and the density $\rho$  for a given $N_x$ and different $N_{\gamma}$.}
\label{Table_Nx_neq_Ng}
\end{table}

We observe  that the error for the 1D component decreases when increasing the grid points on the interface. In particular, the errors obtained for $N_x=60$ and $N_{\gamma}=240$ are smaller than the ones for $N_x=N_{\gamma}=60$ and comparable to the ones for  $N_x=N_{\gamma}=240$. Since the cost of solving the linear system is driven by the number of degrees of freedom in the oxide region, it is therefore very appealing to use a relatively coarse mesh in the oxide region and a finer grid on the interface. 

\subsection{Anisotropic permittivity}\label{section_num_anisotropy}

We now consider the case where the channel dielectric permittivity is given by the diagonal tensor \eqref{perm_tensor}. We have seen that the effective equation \eqref{eq_interf} contains only the in-plane permittivity $\epsilon_{//}$ and that a possibility to retain the information of the out-of-plane permittivity  $\epsilon_{\perp}$ is to replace the Dirichlet type continuity conditions \eqref{eq_continuity} by the Robin type condition \eqref{eq_continuity2}. In this part, we perform a comparison between these two continuity conditions. To estimate the differences, we 
compare them with an approximate solution of the transmission problem \eqref{eq1_strip}-\eqref{cond_transm_strip} obtained with a very fine mesh. For that, the delta function in the second member of equation \eqref{eq2_strip} is approximated by $\frac{1}{a\sqrt{\pi}} e^{-(y/a)^2}$ with $a=0.008$ nm (25 times smaller than $d$). Since the numerical convergence and the effect of the number of discretization points discussed in the previous subsections are not affected by the choice of the continuity conditions, we do not present them again. Instead, we concentrate on the vertical potential slice $u(\frac{L}{2},y)$ at thermal equilibrium and on the current-voltage characteristics to analyze the effect of the continuity conditions.

Obtained results are presented in Figs. \ref{Fig_pop_epst_normal} and \ref{Fig_IV_epst_normal} for the case $N_x=N_{\gamma}=240$ and $N_y=16$. Solid blue lines correspond to the transmission problem, dashed red lines to the interface approach with the continuity condition \eqref{eq_continuity} and dashdotted purple lines to the interface approach with \eqref{eq_continuity2}. We observe only slightly changes between the three approaches, both at thermal equilibrium in Fig.\ref{Fig_pop_epst_normal} and with applied voltages in Fig.\ref{Fig_IV_epst_normal}. In particular, for $V_{DS}=0$ V, we have $|u_{ch}(\frac{1}{2},0) - u_{\gamma}(0)|=3.0\times 10^{-5}$ with \eqref{eq_continuity} and $|u_{ch}(\frac{1}{2},0) - u_{\gamma}(0)|=9.83\times 10^{-5}$ with \eqref{eq_continuity2}. 

\begin{figure}[t!]
\begin{center}
\includegraphics[scale=0.2, keepaspectratio=true]{./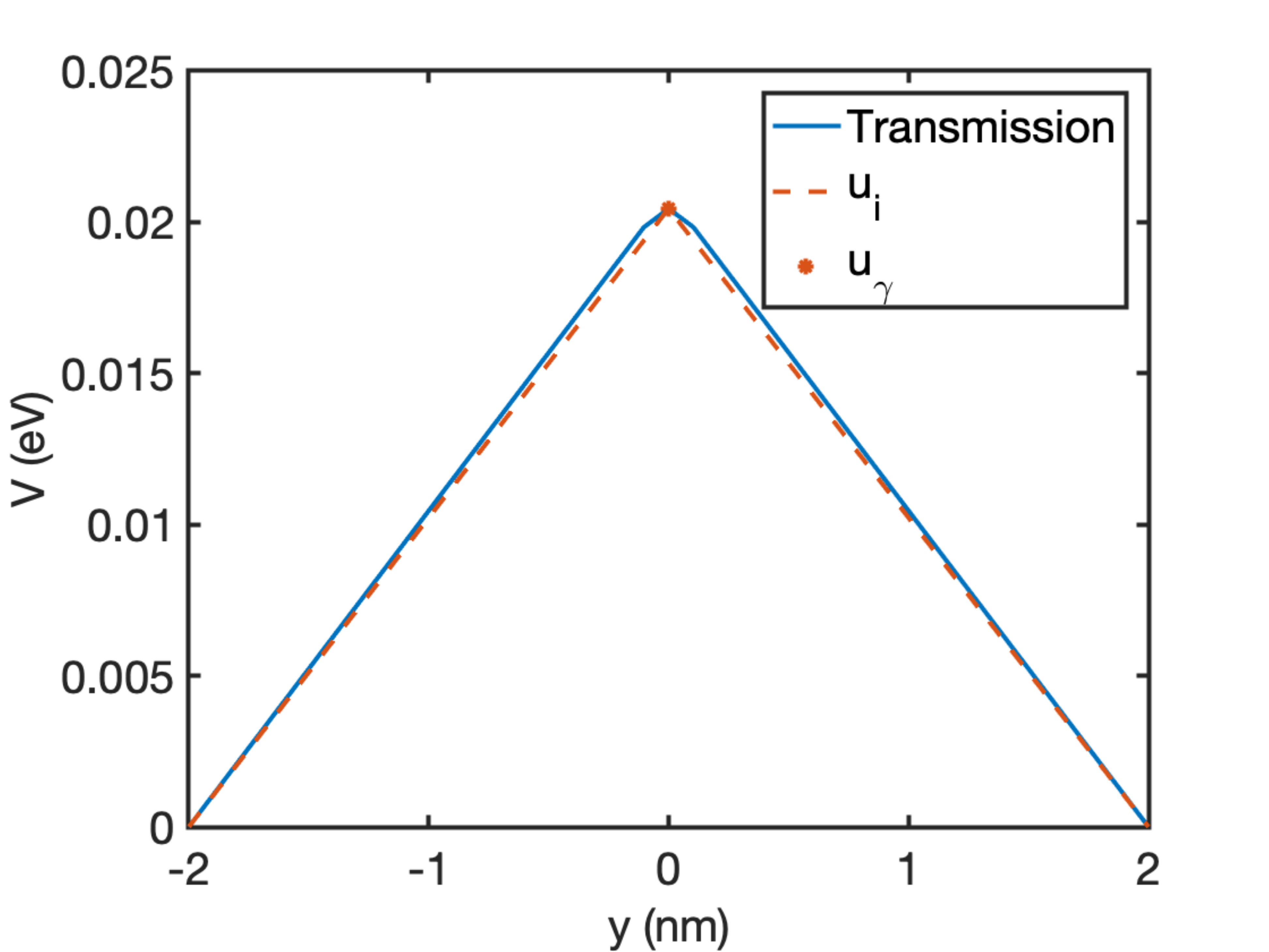}
\includegraphics[scale=0.2, keepaspectratio=true]{./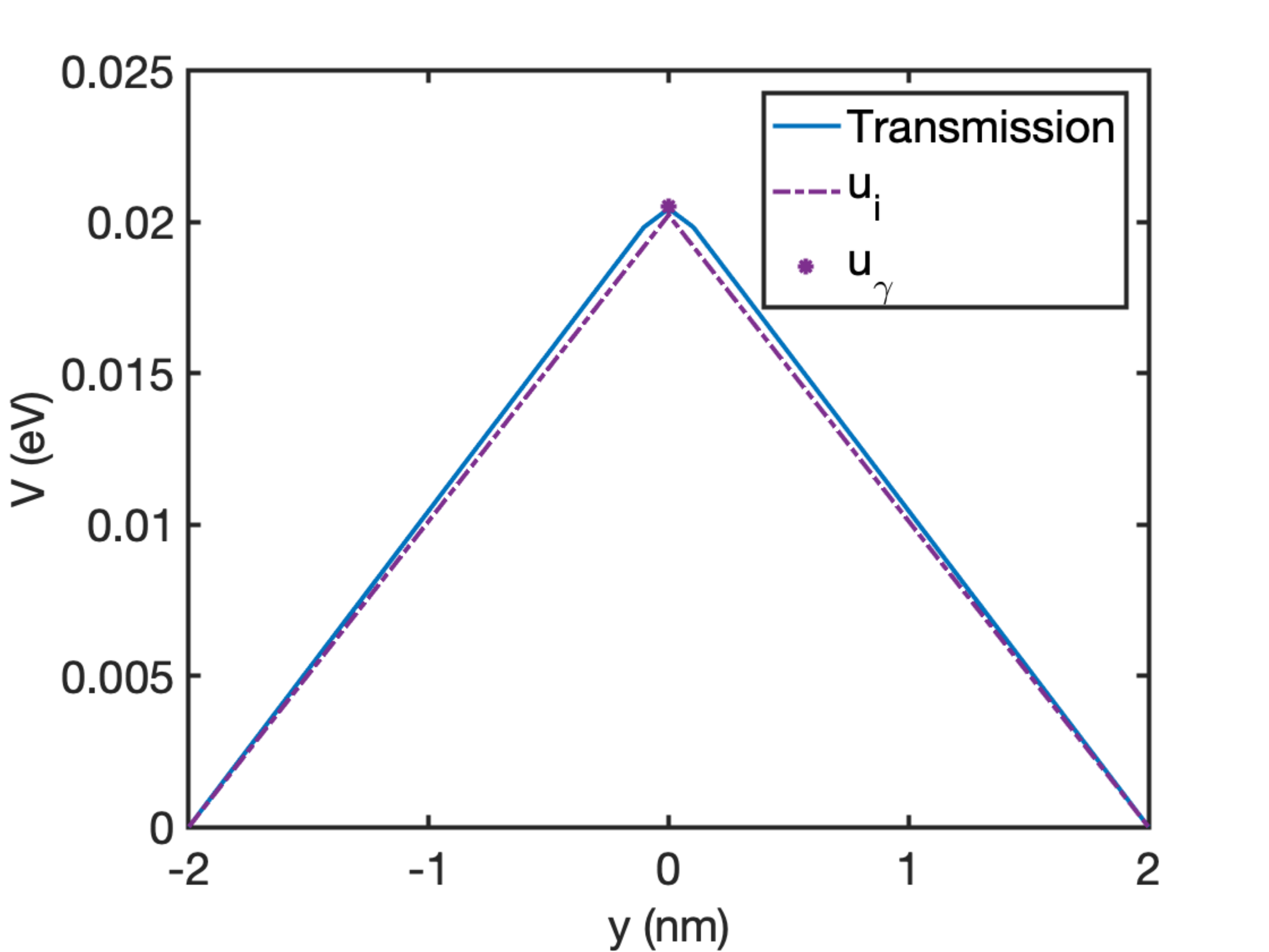}
\end{center}
\caption{Vertical potential slice $u(\frac{L}{2},y)$ at thermal equilibrium obtained for the transmission problem \eqref{eq1_strip}-\eqref{cond_transm_strip}    (solid blue line) and for the interface approach with condition \eqref{eq_continuity} (left) or condition \eqref{eq_continuity2} (right) for the case $\epsilon_{//}=13.9 \ \epsilon_0$ and $\epsilon_{\perp}=6.9 \ \epsilon_0$.}
\label{Fig_pop_epst_normal}
\end{figure}

\begin{figure}[t!]
\begin{center}
\includegraphics[scale=0.22, keepaspectratio=true]{./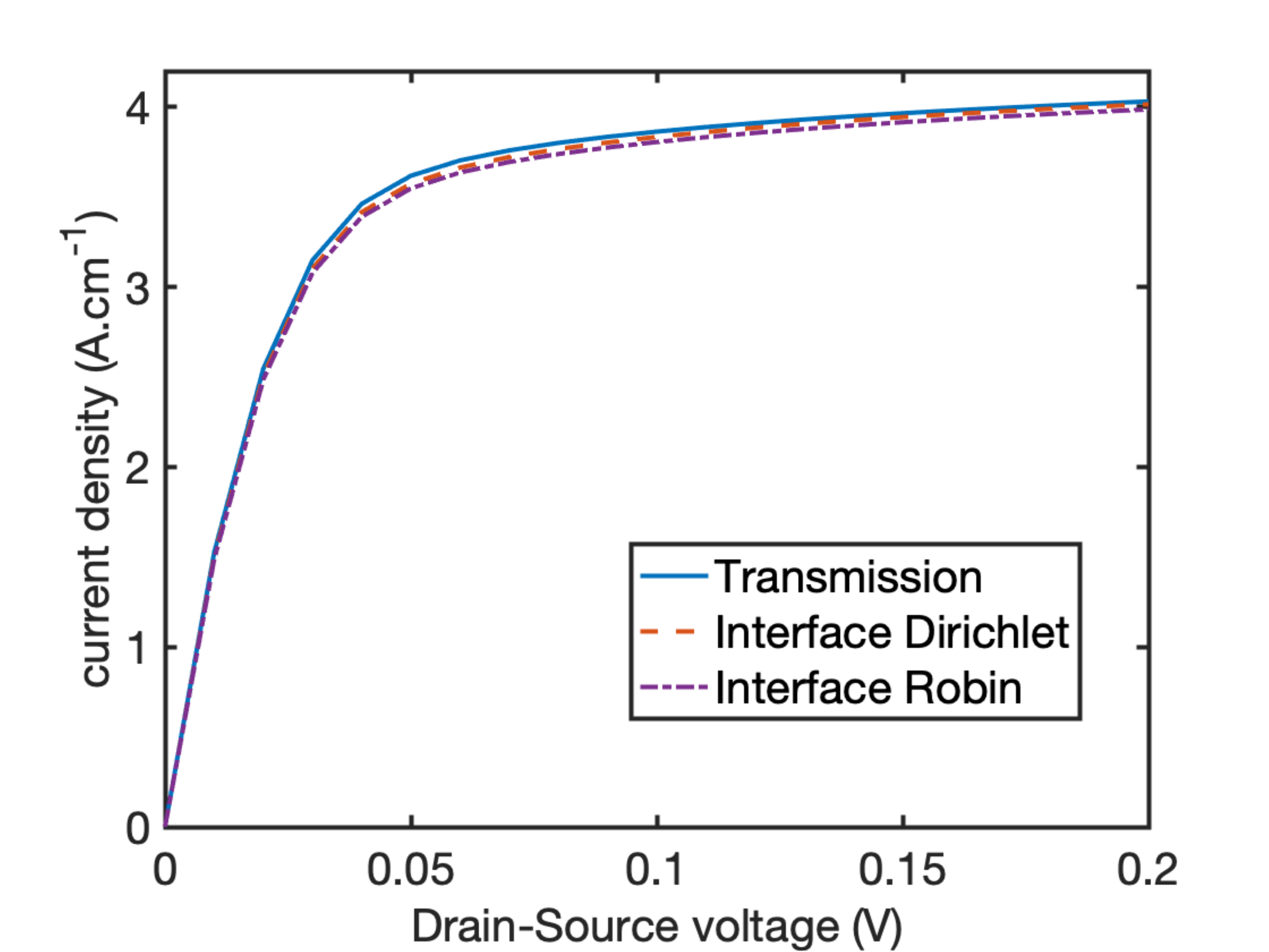}
\end{center}
\caption{Current-voltage characteristics for the case $\epsilon_{//}=13.9 \ \epsilon_0$ and $\epsilon_{\perp}=6.9 \ \epsilon_0$.}
\label{Fig_IV_epst_normal}
\end{figure}

However, to overemphasize the effect of the continuity condition, we then choose an artificial extreme out-of-plane permittivity $\epsilon_{\perp}=0.1 \ \epsilon_0$. Obtained results are presented in Figs. \ref{Fig_pop_epst01} and \ref{Fig_IV_epst01}. They clearly show that the discontinuity allowed by the condition \eqref{eq_continuity2} between the interface component $u_{\gamma}$ and the oxide components $u_{i}$ is essential to capture the effects due to a strong anisotropy.

\begin{figure}[h!]
\begin{center}
\includegraphics[scale=0.2, keepaspectratio=true]{./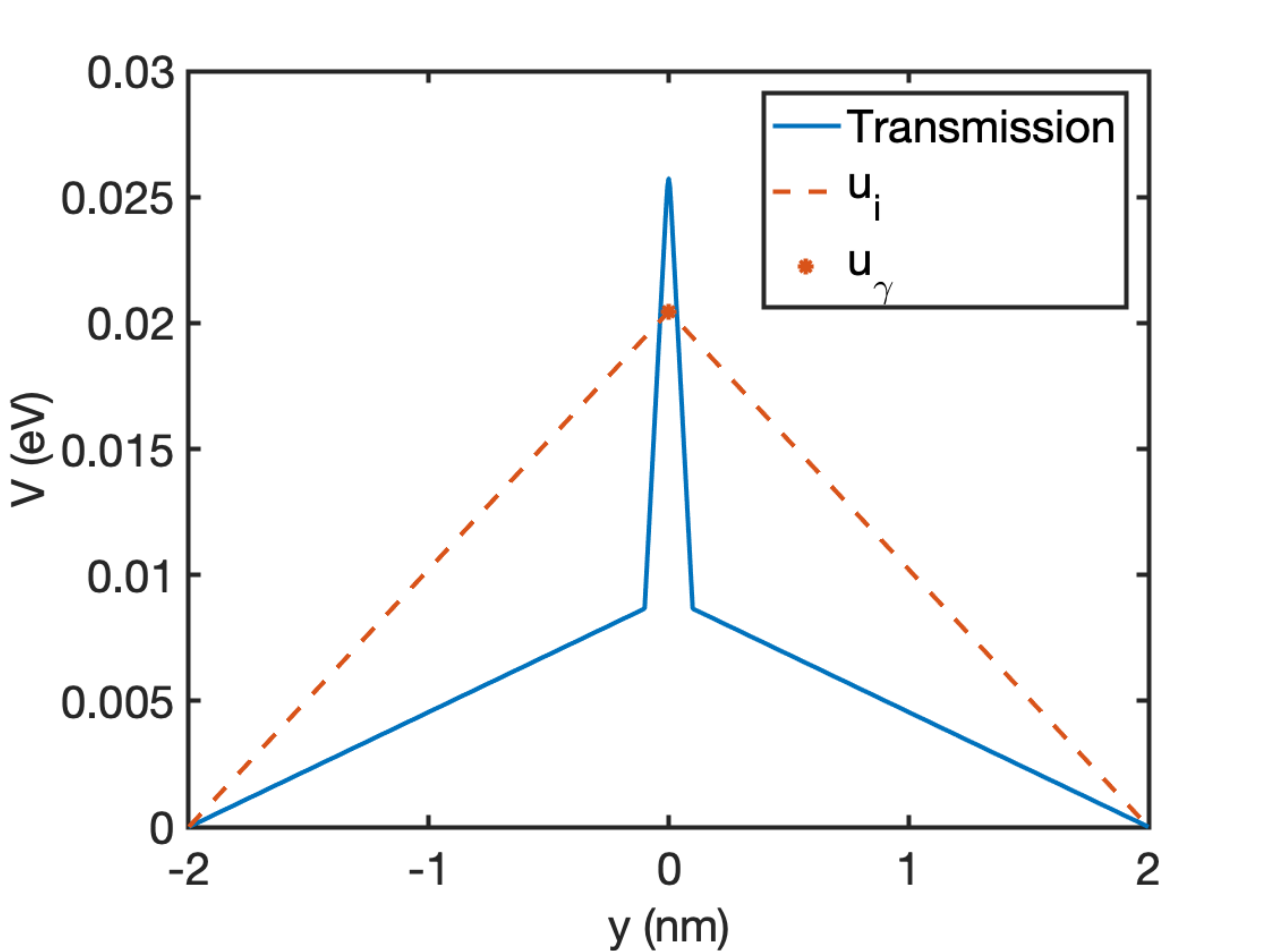}
\includegraphics[scale=0.2, keepaspectratio=true]{./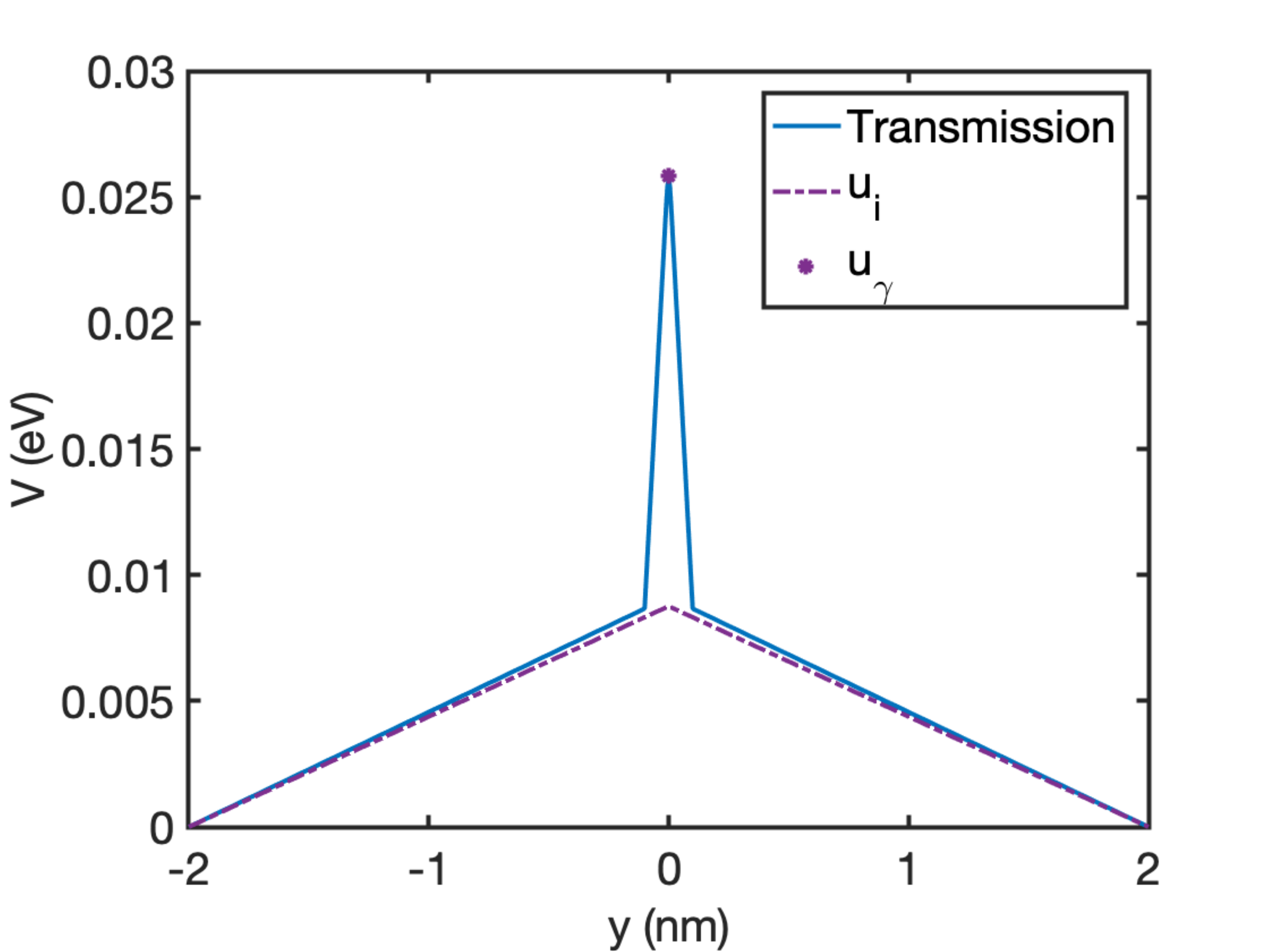}
\end{center}
\caption{Vertical potential slice $u(\frac{L}{2},y)$ at thermal equilibrium obtained for the transmission problem \eqref{eq1_strip}-\eqref{cond_transm_strip}    (solid blue line) and for the interface approach with condition \eqref{eq_continuity} (left) or condition \eqref{eq_continuity2} (right) for the case $\epsilon_{//}=13.9 \ \epsilon_0$ and $\epsilon_{\perp}=0.1 \ \epsilon_0$.}
\label{Fig_pop_epst01}
\end{figure}

\begin{figure}[h!]
\begin{center}
\includegraphics[scale=0.22, keepaspectratio=true]{./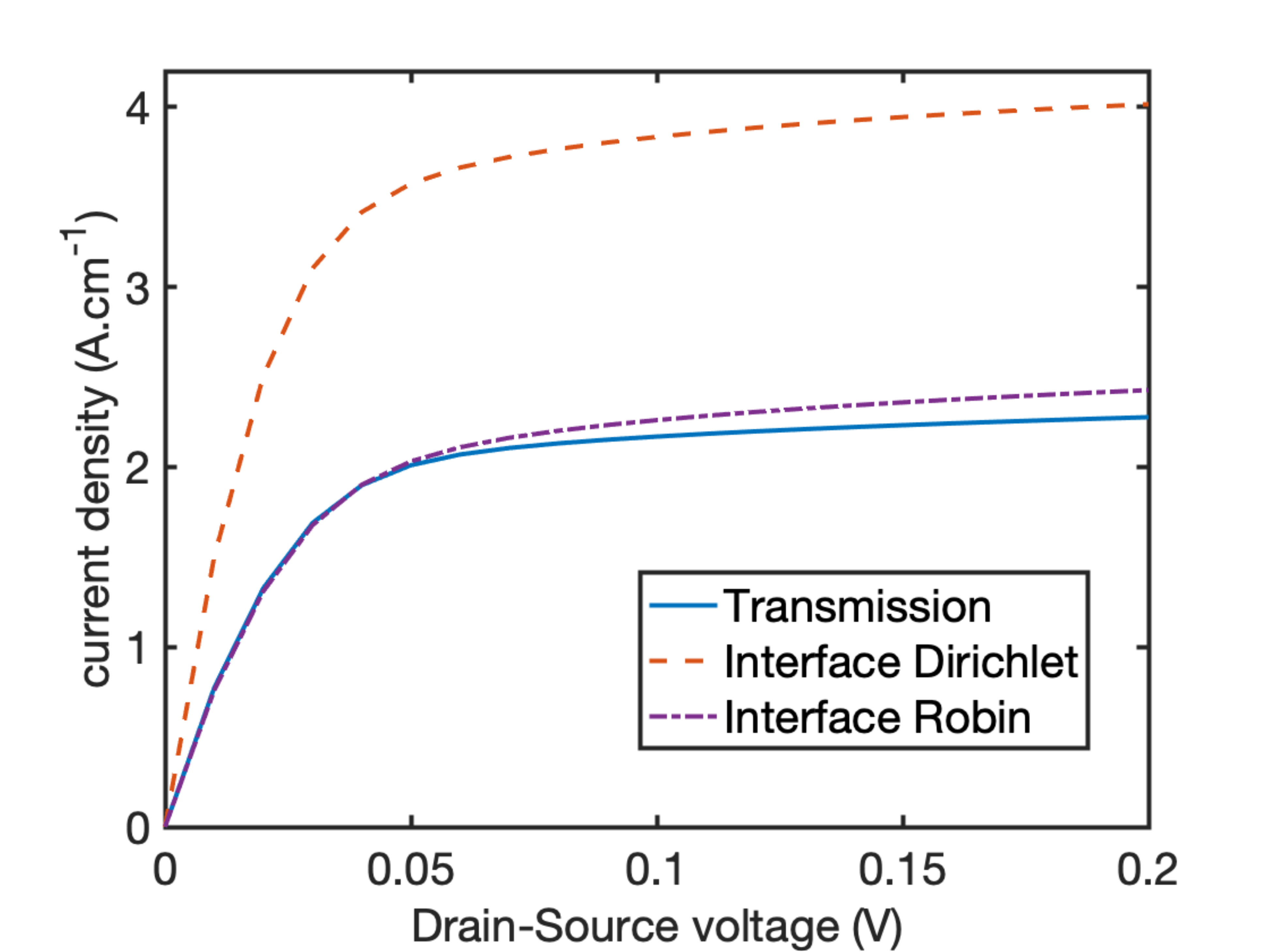}
\end{center}
\caption{Current-voltage characteristics for the case $\epsilon_{//}=13.9 \ \epsilon_0$ and $\epsilon_{\perp}=0.1 \ \epsilon_0$.}
\label{Fig_IV_epst01}
\end{figure}


\section{Conclusion}

We discussed on the numerical resolution of a Poisson equation describing the electrostatics  of  devices in the presence of a semiconducting single-layer material. The proposed interface approach provides a good framework for the mathematical analysis and for the approximation of its variational formulation. A Robin type continuity condition along $\gamma$ (with a Robin coefficient depending on the effective dielectric thickness $d$) can be imposed to consider out-of-plane/in-plane permittivities for a better description of the single-layer/oxide interactions.
The presented  numerical scheme has the advantage to avoid the need of a fine mesh in a 2D region around the single layer material. Moreover, the assembling of the associated matrix  is done at a  cost comparable with the linear case, even when a coupled transport-Poisson model is considered. Finally, it is worth mentioning the possibility of using a relatively coarse mesh in the oxide region and a finer grid on the interface. As continuation of this work, we expect to take great advantage of these interesting features of the interface approach for the resolution of the Poisson equation in the context of  a Dirac-Poisson coupling to perform self-consistent computations of a GFET with an enriched description of the particle transport.\\

\noindent {\bf Acknowledgments:} The first author acknowledges partial support of the IDEX-IRS project NUM-GRAPH ``NUMerical simulation of the electron transport through GRA\-PHene nanostructures'' funded by Univ. Grenoble Alpes and Grenoble INP. The second author acknowledges the financial support of Italian Ministry of University and Research (MUR) through the PRIN grant n. 201744KLJL.


{\footnotesize \bibliographystyle{plain}
\bibliography{MaBiblio}}

\end{document}